\newcommand{\mcm}[3]{\newcommand{#1}[#2]{{\ensuremath{#3}}}} 
\mcm{\tuple}{1}{\langle #1 \rangle}
\mcm{\name}{1}{\ulcorner #1 \urcorner}
\mcm{\Nbb}{0}{\mathbb{N}}
\mcm{\Zbb}{0}{\mathbb{Z}}
\mcm{\Rbb}{0}{\mathbb{R}}
\mcm{\Cbb}{0}{\mathbb{C}}
\mcm{\Qbb}{0}{\mathbb{Q}}
\mcm{\Acal}{0}{\cal A}
\mcm{\Bcal}{0}{\cal B}
\mcm{\Ccal}{0}{\cal C}
\mcm{\Dcal}{0}{\cal D}
\mcm{\Ecal}{0}{\cal E}
\mcm{\Fcal}{0}{\cal F}
\mcm{\Gcal}{0}{\cal G}
\mcm{\Hcal}{0}{\cal H}
\mcm{\Ical}{0}{\cal I}
\mcm{\Jcal}{0}{\cal J}
\mcm{\Kcal}{0}{\cal K}
\mcm{\Lcal}{0}{\cal L}
\mcm{\Mcal}{0}{\cal M}
\mcm{\Ncal}{0}{\cal N}
\mcm{\Ocal}{0}{{\cal O}}
\mcm{\Pcal}{0}{{\cal P}}
\mcm{\Qcal}{0}{{\cal Q}}
\mcm{\Rcal}{0}{{\cal R}}
\mcm{\Scal}{0}{{\cal S}}
\mcm{\Tcal}{0}{{\cal T}}
\mcm{\Ucal}{0}{{\cal U}}
\mcm{\Vcal}{0}{{\cal V}}
\mcm{\Wcal}{0}{{\cal W}}
\mcm{\Xcal}{0}{{\cal X}}
\mcm{\Ycal}{0}{{\cal Y}}
\mcm{\Zcal}{0}{{\cal Z}}
\mcm{\Mfrak}{0}{\mathfrak M}
\mcm{\restric}{0}{\upharpoonright}
\mcm{\upset}{0}{\uparrow}
\mcm{\onto}{0}{\twoheadrightarrow}
\mcm{\smallNbb}{0}{{\small \mathbb{N}}}
\DeclareMathOperator{\preop}{op}
\mcm{\op}{0}{^{\preop}}
\newcommand{\theoremize}[2]{\newaliascnt{#1}{thm} \newtheorem{#1}[#1]{#2} \aliascntresetthe{#1}}
\theoremstyle{plain}
\newtheorem{thm}{Theorem}[section]
\theoremstyle{definition}
\theoremstyle{plain}
\title{Embedding simply connected \\ 2-complexes in 3-space\\ \Large V. A refined Kuratowski-type 
characterisation}
\author{Johannes Carmesin
\medskip 
\\
  {University of Birmingham}
}
\mcm{\Fbb}{0}{\mathbb{F}}
\begin{document}

\maketitle

\begin{abstract}
This paper is the last paper in a series of five papers. Building on earlier papers in this series, 
we prove an analogue of Kuratowski's characterisation of graph planarity for three 
dimensions. 

More precisely, a simply connected 2-dimensional simplicial complex embeds in 3-space 
if and only if it has no obstruction from an explicit list. This list of obstructions is finite 
except for one infinite family. 
\end{abstract}

\section{Introduction}

We assume that the reader is familiar with \cite{3space1}. In that paper we prove that a locally 
3-connected simply connected 2-dimensional simplicial complex has a 
topological embedding into 3-space if and only if it has no space minor from a finite 
explicit list $\Zcal$ of obstructions. The purpose of this paper is to extend that theorem beyond 
locally 3-connected (2-dimensional) simplicial complexes to simply connected
simplicial complexes in general. 

\vspace{0.3 cm}

The first question one might ask in this direction is whether the assumption of local 
3-connectedness could simply be dropped from the result of \cite{3space1}. Unfortunately this is not 
true. One new obstruction can be constructed from the M\"obius-strip as follows. 

Consider the central cycle of the M\"obius-strip, see \autoref{fig:moe}. Now attach a disc at that 
central 
cycle. In a few lines we explain why this topological space $X$ cannot be embedded in 3-space. Any 
triangulation of $X$ gives an obstruction to embeddability. It can be shown that such 
triangulations have no space minor in the finite list $\Zcal$.

   \begin{figure} [htpb]   
\begin{center}
   	  \includegraphics[height=3cm]{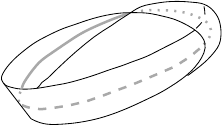}
   	  \caption{The  M\"obius-strip. The central cycle is depicted in grey.}\label{fig:moe} 
\end{center}
   \end{figure}

Why can $X$ not be embedded in 3-space? To answer this, consider a small torus around the 
central cycle. The disc and the M\"obius-strip each intersect that torus 
in a circle. These circles however have a different homotopy class in the torus. Since any two 
circles in the torus of a different homotopy class intersect\footnote{A simple way to see this is 
to note that the torus with a circle removed is an annulus. }, the space $X$ cannot 
be embedded in 3-space without intersections of the disc and the M\"obius-strip. Obstructions of 
this type we call \emph{torus crossing obstructions}. A precise definition is given in 
\autoref{s0}. 

A refined question might now be whether the result of  \cite{3space1} extends to simply connected
simplicial complex if we add the list $\Tcal$ of torus crossing obstructions to the list $\Zcal$ of 
obstructions. The answer to this question is `almost yes'. Indeed, we just need to add to the space 
minor operation the operations of stretching defined in \autoref{s3}. 
These operations are illustrated in \autoref{fig:s_pair}, \autoref{fig:stretch_branch} and 
\autoref{fig:stretch_edge}.

It is not hard to show that 
stretching preserves embeddability. The 
main result of this paper is the following. 

\begin{thm}\label{Kura_gen}\label{Kura_simply_con}
  Let $C$ be a simply connected simplicial complex. The following are equivalent.
 \begin{itemize}
  \item $C$ has a topological embedding in 3-space;
  \item $C$ has no stretching that has a space minor in $\Zcal\cup \Tcal$. 
 \end{itemize}
\end{thm}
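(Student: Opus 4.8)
The plan is to prove the two implications separately; the forward direction is routine and the backward direction carries all the difficulty.

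For the forward direction I would argue through the chain of operations contrapositively. By \cite{3space1} every obstruction in $\Zcal$ fails to embed in 3-space, and the torus-homotopy argument sketched in the introduction shows that every member of $\Tcal$ fails to embed as well, so no complex in $\Zcal\cup\Tcal$ embeds. Since the space minor operation preserves embeddability (a complex with a non-embeddable space minor cannot itself embed, by \cite{3space1}) and, as remarked just before the theorem, stretching preserves embeddability, a complex $C$ admitting a stretching with a space minor in $\Zcal\cup\Tcal$ cannot embed. Contrapositively, if $C$ embeds then no stretching of it has such a space minor.

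For the backward direction I would reduce to the locally 3-connected case handled by \cite{3space1}, inducting on the number of stretchings needed to make $C$ locally 3-connected. If $C$ is already locally 3-connected, then applying the hypothesis to the trivial stretching gives that $C$ has no space minor in $\Zcal$, so \cite{3space1} produces an embedding directly. Otherwise local 3-connectivity fails at some branch vertex or branch edge, that is, there is a local separation of low order; I would stretch $C$ at this separation, splitting the faces around the offending edge or vertex into the parts dictated by the separation, to obtain a simply connected complex $C'$ that is strictly closer to being locally 3-connected. Because every stretching of $C'$ is a stretching of $C$, the hypothesis passes to $C'$, so by induction $C'$ embeds in 3-space; here I would need to verify that a maximal sequence of stretchings terminates in a locally 3-connected complex, so that the induction measure is well founded. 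It then remains to convert an embedding of $C'$ into one of $C$ by reversing the single stretching, namely by re-identifying the separated edge or vertex.

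The main obstacle is exactly this re-gluing step, and it is where the torus crossing obstructions enter. After fixing an embedding of $C'$, the parts meeting at a re-identified branch edge each meet a small torus around that edge in a circle, and the parts meeting at a re-identified branch vertex each meet a small sphere in an arc or circle; the re-identification avoids forcing a crossing precisely when these circles can be placed in a single consistent nested cyclic order. I would show that whenever no such consistent arrangement exists, the incompatible homotopy classes on the torus certify a torus crossing configuration that can be exhibited as a space minor of a suitable stretching of $C$, contradicting the hypothesis; hence a consistent arrangement always exists and $C$ embeds. The delicate points I expect are: controlling the freedom in the embedding of $C'$ by invoking the essential uniqueness of embeddings of locally 3-connected simply connected complexes from \cite{3space1}, so that the local arrangement to be matched is forced rather than chosen; using simple connectivity to ensure that the global topology of 3-space imposes no obstruction beyond these local torus and sphere compatibilities; and matching the combinatorics of admissible nested orders against the explicit list $\Tcal$, whose single infinite family should correspond to nested configurations of unbounded size around a branch edge.
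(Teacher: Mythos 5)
The central gap is in your backward direction. You plan to obtain an embedding of the stretched complex $C'$ by induction and then ``reverse the single stretching'' geometrically. That reversal --- if a stretching of $C$ embeds in 3-space then so does $C$ --- is precisely the converse of the embedding lemmas that the paper states only as unproved remarks (see the remarks following \autoref{stretch_22_embed}, \autoref{stretch_branch_embed} and \autoref{stretch_edge_embed}, which the paper explicitly says it will not use), and your sketch of nested circles on small tori and spheres is a plan for a difficult geometric argument, not a proof. The paper is structured so that no such geometric re-gluing is ever needed: it never manipulates embeddings at all. Instead it invokes the theorem of the second paper of the series \cite{3space2} that a simply connected simplicial complex embeds in 3-space if and only if it has a planar rotation system (this is the only place simple connectivity and Perelman's theorem enter), proves that stretchings preserve the existence of planar rotation systems in both directions (\autoref{planar_rot_preserve} --- a purely combinatorial statement, far weaker than the geometric converse you need), produces one explicit stretching $C'$ that is locally almost 3-connected and stretched out or has a non-planar link (\autoref{main_streching}), and applies \autoref{Kura_almost} to $C'$. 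Your proposal never mentions planar rotation systems, and without that bridge the backward direction cannot be completed along the lines you describe.

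A second, independent problem is your induction target. One cannot in general stretch a simplicial complex to a locally 3-connected one: a vertex whose link graph is a path or a 3-star can never acquire a 3-connected link under these operations, and vertices whose links are parallel graphs (para-cycles) are likewise unavoidable. This is exactly why the paper introduces the weaker notion ``locally almost 3-connected and stretched out'' and must first prove the extension \autoref{Kura_almost} of the result of \cite{3space1} to that class; and it is there, through para-cycles and \autoref{has_obstruction}, that the torus crossing obstructions $\Tcal$ actually enter the argument --- not, as in your sketch, through a compatibility analysis of circles when re-gluing an embedding. Note also that the termination of a maximal sequence of stretchings, which you defer as something ``to verify,'' is not a side condition: it is the substance of \autoref{main_streching} and of the degree-parameter arguments occupying Sections 3--5 of the paper. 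Finally, your forward direction leans on ``stretching preserves embeddability,'' which the paper likewise leaves unproved and avoids by routing that direction through planar rotation systems as well.
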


We deduce \autoref{Kura_gen} from the results of \cite{3space1} in two steps as follows.
The notion of `local almost 3-connectedness and stretched out' is slightly more general and more 
technical than `local 3-connectedness', see \autoref{s0} for a definition. First we extend the 
results of 
\cite{3space1} to locally almost 3-connected and stretched out simply connected
simplicial complexes, see \autoref{Kura_simply_con2} below. 
We conclude the proof by 
showing that any simplicial complex can be stretched to a locally almost 3-connected and 
stretched out one. More 
precisely:

\begin{thm}\label{main_streching}
 For any simplicial complex $C$, there is a simplicial complex $C'$ obtained from $C$ by stretching 
so that $C'$ is 
locally almost 3-connected and stretched out or $C'$ has a non-planar link.

Moreover $C$ has a planar rotation system if and only if $C'$ has a planar rotation system.
\end{thm}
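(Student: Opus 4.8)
The plan is to prove \autoref{main_streching} by iteratively applying stretching operations to repair the local structure of $C$ wherever it fails to be locally almost 3-connected or stretched out, while keeping the process terminating by a carefully chosen complexity measure. I would first fix precise definitions from \autoref{s0}: local almost 3-connectedness is a condition on the links of vertices (and presumably edges), so the obstruction to it lives locally at individual simplices. The natural strategy is to look at a vertex $v$ whose link $\mathrm{lk}(v)$ is not appropriately connected. If some link is non-planar, we are immediately in the second alternative of the conclusion and done; so throughout I would assume all links are planar and show that stretching can fix every local connectivity defect.

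\medskip

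First I would analyse a single offending vertex or edge. The stretching operations of \autoref{s3} (illustrated in \autoref{fig:s_pair}, \autoref{fig:stretch_branch}, \autoref{fig:stretch_edge}) are designed precisely to separate a vertex whose link is disconnected, or to split apart a low-order separation, by introducing new vertices/edges that ``pull apart'' the complex along a separating structure in the link. The key local claim is: if $\mathrm{lk}(v)$ is planar but witnesses a failure of almost 3-connectedness (a cut vertex, a disconnection, or a 2-separation of the wrong kind), then there is a stretching supported near $v$ that strictly reduces the local complexity at $v$ without creating a worse defect elsewhere and without introducing a non-planar link. I would verify that each stretching move, being a local surgery along a planar separator in a link, preserves planarity of all links; this is where I would lean on the planarity of the link being stretched, decomposing it along the separating cycle or cut vertex so that the two resulting links are each planar.

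\medskip

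Next I would set up the global termination argument, which I expect to be the main obstacle. Stretching can create new vertices and new simplices, so the number of simplices is not monotone; I would instead design a well-founded measure — for instance a lexicographic combination of (the number of vertices whose link fails the connectivity condition, then some secondary quantity such as total link-defect summed over vertices, bounded suitably). The delicate point is to show that a single stretching step, while possibly adding simplices, \emph{strictly decreases} this measure: the newly introduced vertices must have strictly better-connected links than the vertex they replace, so they do not contribute fresh top-level defects. Establishing that the new links are genuinely ``simpler'' (so no infinite regress of stretching occurs) is the crux, and I would prove it by quantifying the separation being resolved and checking each stretching type reduces it. Once the process terminates, the resulting $C'$ has all links planar and all local connectivity conditions satisfied, i.e.\ it is locally almost 3-connected and stretched out.

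\medskip

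Finally I would address the ``Moreover'' clause on planar rotation systems. Since stretching is an embeddability-preserving operation (as noted in the excerpt) and is, up to the local surgery, reversible, I expect the correspondence between planar rotation systems of $C$ and of $C'$ to follow by tracking how a rotation system restricts and extends across a single stretching move: each stretching locally replaces one rotation pattern at a vertex by the compatible split rotations at the new vertices, and conversely. I would prove this equivalence for one stretching step and then compose it along the finite sequence of steps produced above to obtain $C'$. The main work here is a bijection-style argument at the level of a single move; the global statement then follows by induction on the number of stretching operations used.
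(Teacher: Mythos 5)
Your overall architecture --- repair local defects one at a time, terminate via a well-founded measure, and obtain the ``Moreover'' clause by composing one-step equivalences for planar rotation systems --- matches the paper's in outline, and your treatment of the rotation-system part is essentially what \autoref{planar_rot_preserve} does. But there is a genuine gap at what you yourself identify as the crux: the claim that every local connectivity defect can be strictly reduced by a stretching ``supported near $v$'' that pulls the complex apart along a separating structure in the link. This fails for the configuration that actually drives the paper's proof. After splitting vertices with disconnected links and stretching all branches at a local cutvertex $e$ of $L(v)$, the link at $v$ becomes a \emph{star of parallel graphs}; the faces of each branch incident with $e$ remain incident with $e$, so the degree of the cutvertex is unchanged. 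Such a link is not $2$-connected (so $2$-stretching does not apply), it does not force any two faces to be adjacent in every planar rotator at $e$ (so edge-stretching does not apply), and further branch-stretchings only reproduce the same picture. No ``pull apart'' operation reduces this defect, so a measure counting defective vertices or summed link-defects stalls rather than decreases.

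The paper escapes this by making \emph{contraction} part of the stretching toolkit: it locates vertex-disjoint paths $\Pcal$ joining vertices whose links are stars of parallel graphs with cutvertex of maximal degree $a$, all of whose interior links are parallel graphs (\autoref{path_exists}, \autoref{cutvx}), and contracts them; these edges are reversible by \autoref{is_rev}, and the resulting vertex sums have maximum degree strictly below $a$. Termination is then a nested induction --- an outer induction on the cutvertex-degree and inner inductions on the degree-parameter (\autoref{be_nice}, \autoref{stretch_loc_con}, via \autoref{2-stretch-change}) --- rather than a single measure on defects. To salvage your plan you would need to (i) admit contraction of reversible non-loop edges and vertex splitting among the allowed moves, (ii) prove the existence of the paths $\Pcal$ in an $a$-structured complex and that contracting them creates no loops or parallel edges, and (iii) handle separately the ``stretched out'' condition (\autoref{make_strected_out}), which your proposal does not address beyond asserting termination.
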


The overall structure of the argument is similar to that for problems in structural graph theory 
with `3-connected kernel' (in such arguments one first proves the 3-connected case, then in a 
second step deduces the 2-connected case and then finally deduces the general case).
In \autoref{s0} we prove 
the extension of the results of \cite{3space1} to locally almost 3-connected and stretched out 
simplicial complexes, \autoref{Kura_almost}. 
In \autoref{s2} we develop the tools to extend this to the locally almost 2-connected case. 
In \autoref{s1} and \autoref{s3}  we extend \autoref{Kura_almost} to general simplicial complexes, 
which proves \autoref{main_streching}. Then we prove 
\autoref{Kura_simply_con}. Finally in \autoref{algo_sec} we describe algorithmic consequences. 

For graph theoretic definitions we refer the 
reader to \cite{DiestelBookCurrent}.

\section{A Kuratowski theorem for locally almost 3-connected simply connected simplicial 
complexes}\label{s0}

In this section we prove \autoref{Kura_simply_con2}, which is used in the proof of the main theorem.
First we define the list $\Tcal$ of torus crossing obstructions. 

Given a simplicial complex $C$, a \emph{mega face} $F=(f_i|i\in \Zbb_n)$ is a cyclic 
orientation of faces $f_i$ of $C$ together with for every $i\in \Zbb_n$ an edge 
$e_i$ of $C$ that is only incident with $f_i$ and $f_{i+1}$ such that the $e_i$ and $f_i$ are 
locally distinct, that is, $e_i\neq e_{i+1}$ and $f_i\neq f_{i+1}$ for all $i\in \Zbb_n$. We remark 
that since in a simplicial 
complex any two faces can share at most one edge, the edges $e_i$ are implicitly given by the faces 
$f_i$. 
A \emph{boundary component} of a mega 
face $F$ is a connected component of the 1-skeleton of $C$ restricted to the faces $f_i$ after we 
delete the edges $e_i$. 
Given a cycle $o$ that is a boundary component of a mega face $F$, 
we say that $F$ is 
\emph{locally monotone} at $o$ if for every edge $e$ of $o$ and each face $f_i$ containing $e$, the 
next face of $F$ after $f_i$ that contains an edge of $o$ contains the unique edge of $o$ that has 
an endvertex in common with $e$ and $e_{i+1}$.
Under these assumptions for each edge $e$ of $o$ the number of indices $i$ such that $e$ is 
incident with $f_i$ is the same. This number is called the  \emph{winding 
number} of 
$F$ at $o$.

A \emph{torus crossing obstruction} is a simplicial complex $C$ with a cycle $o$ (called the 
\emph{base cycle}) whose faces can be partitioned into two mega faces that both have $o$ has a 
boundary component and are locally monotone at $o$ but with different winding numbers. We denote 
the set of torus crossing 
obstructions by $\Tcal$.

\begin{rem}The set of torus crossing obstructions is infinite. Indeed, it contains at least one 
member for every pair of distinct winding numbers. So it is not possible to reduce it to a finite 
set. However one can further reduce torus crossing obstruction as follows. First, by working with 
the class of 3-bounded 2-complexes as defined in \cite{3space1} instead of simplicial complexes, one 
may assume that the cycle $o$ is a loop. Secondly, one may introduce the further operation of gluing 
two faces along an edge if that edge is only incident with these two faces. This way one can glue 
the two mega faces into single faces. Thirdly, one can 
enlarge the holes of the mega faces to make them into one big hole (after contracting edges 
one may assume that this single hole is bounded by a loop). After all these steps we 
only have one torus crossing obstruction left for any pair of distinct winding numbers. This 
obstruction consists of three vertex-disjoint loops and two faces, each incident with two loops. The 
loop contained by both faces is the base cycle $o$. Here the faces may have winding number greater 
than one. The faces have winding number precisely one at the other loops. 
\end{rem}

A \emph{parallel graph} consists of two vertices, called the \emph{branch vertices}, and a set of 
disjoint paths between them. Put another way, start with a graph with only two vertices and all 
edges going between these two vertices, now subdivide these edges arbitrarily, 
see \autoref{fig:para5}. 

   \begin{figure} [htpb]   
\begin{center}
   	  \includegraphics[height=2cm]{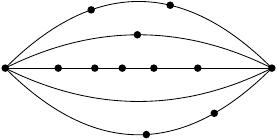}
   	  \caption{A parallel graph with five paths.}\label{fig:para5} 
\end{center}
   \end{figure}
   For example, parallel graphs where the branch vertices have degree two are cycles. 

Given a simplicial complex $C$ and a cycle $o$ of $C$, we say that $o$ is a 
\emph{para-cycle} 
if all link graphs at the vertices of $o$ are parallel graphs.
\begin{lem}\label{has_obstruction}
  Let $C$ be a simplicial complex. Assume that $C$ has a para-cycle $o$ such that for some 
edge 
$e$ of $o$ the link graph $L$ of the contraction $C/(o-e)$  at the vertex $o-e$ is not loop planar. 
Then a torus crossing obstruction can be obtained from $C$ by deleting faces. 
\end{lem}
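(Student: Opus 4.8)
The plan is to read the candidate mega faces directly off the para-cycle structure and then to show that non-loop-planarity of $L$ forces two of them to have distinct winding numbers. First I would unpack the local picture around $o$. Since $o$ is a para-cycle, the link at each vertex $v_i$ of $o$ is a parallel graph whose two branch vertices are exactly the two edges of $o$ incident with $v_i$; the internally disjoint paths between these branch vertices correspond to the maximal \emph{strips} of faces running alongside $o$ through $v_i$. Splicing these paths across consecutive vertices of $o$ partitions the faces incident with $o$ into cyclic sequences, each of which is a mega face that has $o$ as a boundary component and is locally monotone at $o$, and hence carries a well-defined winding number in the sense defined above. This reduces the statement to producing two such strips with \emph{different} winding numbers, after which deletion of the remaining faces will do the job.

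Next I would describe the link $L$ of $C/(o-e)$ at the vertex $o-e$ in terms of these strips. Contracting $o-e$ collapses every edge of $o$ except $e$, so the branch vertices coming from the contracted edges disappear from the link and the strip-paths at consecutive vertices get spliced together into longer paths, while $e$ survives as a loop, contributing its two ends to $L$. Thus, up to the transverse structure that is irrelevant here, $L$ is a union of paths joining the two ends of the loop $e$, one path per strip, where a strip of winding number $w$ contributes a path that wraps $w$ times around. The decisive claim is then that \emph{$L$ is loop planar if and only if all strips have the same winding number}. For the direction we need I would prove the contrapositive form: if all winding numbers agree, the strip-paths are mutually parallel curves of a common slope on the boundary torus of a tubular neighbourhood of $o$, so they can be drawn disjointly in the sphere with the loop $e$ bounding a disc, witnessing loop planarity. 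Consequently, if $L$ is not loop planar, two strips $S_1,S_2$ of distinct winding numbers must exist.

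Finally, I would delete from $C$ every face not lying in $S_1\cup S_2$. The faces of the resulting complex are then partitioned into exactly the two mega faces $S_1$ and $S_2$, both having $o$ as a boundary component and both locally monotone at $o$ but with different winding numbers; by definition this is a member of $\Tcal$ obtained from $C$ by deleting faces, as required. The main obstacle is the bookkeeping of the middle step: translating non-loop-planarity of $L$ into the winding-number statement requires carefully tracking how the contraction splices the parallel-graph paths, and how the winding number of a strip manifests as the slope of the corresponding curve on the boundary torus, so that the ``curves of different slopes on a torus must cross'' principle from the introduction applies cleanly. A secondary point to check is that local monotonicity at $o$ of each of $S_1,S_2$, and the property that $o$ is one of their boundary components, survive the deletion of the other faces; since both conditions are formulated purely in terms of the faces of $S_i$ together with the edges of $o$, they are preserved, but this should be verified explicitly.
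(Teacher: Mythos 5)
Your overall strategy coincides with the paper's: you build the same canonical partition of the faces around $o$ into mega faces (your ``strips'', obtained by splicing the parallel-graph paths across consecutive vertices of $o$, which is exactly the recursive construction the paper performs inside the link graph $L$ of $C/(o-e)$), you reduce to the contrapositive claim that equal winding numbers force loop planarity, and you finish by deleting all faces outside two strips of distinct winding number. The difference, and the gap, is in how the contrapositive is justified. Loop planarity is a combinatorial condition on rotation systems of the link graph $L$ (a parallel graph with branch vertices $\ell_1,\ell_2$ coming from the two ends of the loop $e$), so what has to be produced is a planar rotation system of $L$ whose rotators at $\ell_1$ and $\ell_2$ are reverse to one another under the identification of faces incident with $e$. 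Your torus picture does not yet deliver this: the statement that ``strip-paths of a common slope can be drawn disjointly'' conflates the winding number around $o$ (one coordinate of the slope) with the full slope. Two curves on a torus with the same first coordinate $w$ but different second coordinates $k_1\neq k_2$ still intersect in $|w||k_1-k_2|$ points, so you must additionally show that the framings of all strips can be chosen equal; that is precisely the nontrivial content of the step you defer as ``bookkeeping''. The paper resolves it purely combinatorially: assuming all $K$ mega faces meet $e$ in exactly $W$ faces, it declares the rotator at $\ell_1$ to be $f[1,1],f[2,1],\dots,f[K,1],f[1,2],\dots,f[K,W]$ and observes that the induced rotator at $\ell_2$ is the reverse of the image of this cyclic order under the shift $f[k,w]\mapsto f[k,w+1]$, which fixes that cyclic order; hence the two rotators are reverse and $L$ is loop planar. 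Without this (or an equivalent framing argument) your middle step is not complete.

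A secondary point: your closing appeal to the principle that ``curves of different slopes on a torus must cross'' is not what this lemma needs. That principle is what shows members of $\Tcal$ do not embed in $3$-space; here one only has to exhibit a member of $\Tcal$, which is a purely combinatorial object (two mega faces with $o$ as a boundary component, locally monotone at $o$, with distinct winding numbers), and your final deletion step handles that correctly once two strips of distinct winding number are known to exist.
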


\begin{proof}
Our aim is to define a torus crossing obstruction with base cycle $o$. For that we define a set of 
possible mega faces as follows.

The complex $C/(o-e)$ has only one loop and that is $e$. We denote the two vertices of $L$ 
corresponding to $e$ by $\ell_1$ and $\ell_2$. 
 Since $o$ is a para-cycle, 
the 
link graph $L$ is (isomorphic to) a parallel graph with branching vertices $\ell_1$ and 
$\ell_2$. We shall define mega faces such that every edge of the parallel graph incident 
with 
$\ell_1$ is a face of precisely one of these mega faces. We define these mega faces recursively. So 
let $f$ be an edge of the parallel graph incident with $\ell_1$ that is not already assigned 
to a mega face. Let $P$ be the path of the parallel graph between $\ell_1$ and $\ell_2$ that 
contains $f$. The 
edges on that path after $f$ are its consecutives in its mega face. The last edge of that path is 
incident with $\ell_2$ and hence it also corresponds to an edge incident with $\ell_1$. If that 
face is equal to $f$ we stop. Otherwise we continue with that face as we did with $f$, see 
\autoref{megafig}.

   \begin{figure} [htpb]   
\begin{center}
   	  \includegraphics[height=3cm]{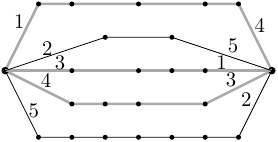}
   	  \caption{The construction of a mega face in a subdivision of $B_5$. The bijection between 
the edges incident 
with $\ell_1$ and $\ell_2$ is indicated by numbers. In grey we marked a set of the edges whose 
faces form a mega face. }\label{megafig} 
\end{center}
   \end{figure}

Eventually, 
we will come back to the face $f$. This completes the definition of the mega face containing $f$. 
This defines a mega face as all interior vertices of these paths have degree two. 
It is clear from this definition that the mega faces partition the edges of the link graph. 
Since $o$ is a para-cycle, these mega-faces are also mega-faces of $C$ and the cycle $o$ is a 
boundary component of each of them. It is straightforward to check that these mega-faces are 
monotone at $o$. 

It suffices to show that two of these mega faces have distinct winding number at $o$.
Suppose not for a contradiction. Then all mega faces have the same winding number.

We enumerate the mega faces and let $K$ be their total number. 
The winding number of a mega face is equal to the number of its traversals of the edge $e$, that 
is, its number of faces that -- when considered as edges of the link graph -- are incident with 
$\ell_1$.
So by our assumption, there is a constant $W$ such that all our mega faces contain 
precisely $W$ faces incident with $e$. We enumerate these faces in a subordering of the mega 
face. More precisely, by $f[k,w]$ we denote the $w$-th face incident with $e$ on the 
$k$-th mega face, where $k$ and $w$ are in the cyclic groups $\Zbb_K$ and $\Zbb_W$, respectively.

We will derive a contradiction by constructing a rotation system of the link graph $L$ that is loop 
planar. 
We embed it in the plane such that the rotation system at $\ell_1$ is $f[1,1]$, 
$f[2,1]$,  \ldots, $f[K,1]$, $f[1,2]$,  $f[2,2]$, \ldots, $f[K,2]$, $f[1,3]$, \ldots ,\ldots, 
$f[K,W]$, $f[1,1]$. 

Then the rotation system at $\ell_2$ is obtained from the that of $\ell_1$ by replacing each 
face  $f[k,w]$ by $f[k,w+1]$ and then reversing. Since this shift operation keeps this particular 
cyclic ordering invariant, the rotation systems at $\ell_1$ and $\ell_2$ are reverse. So this 
defines a loop planar embedding of the link graph. Hence $L$ has a loop planar rotation system. 
This is the 
desired contradiction to our assumption. 
Hence two mega faces must have a different winding number. So $C$ contains a torus crossing 
obstruction. 
\end{proof}

\begin{rem}Next we define `stretched out'. 
 This is a technical condition, which is used only twice in the 
argument, 
namely in the proof of \autoref{Bm_path} and \autoref{41_2} below. We remark that the notion of 
stretched out as defined here is only intended to be useful for locally 3-connected 2-complexes. 
There it very roughly says that every edge of degree two has an endvertex whose link graph is every 
simple. 
\end{rem}

A simplicial complex is \emph{stretched out} if 
 every edge incident with only two faces has an endvertex $x$ such that the link graph at $x$ 
is not a subdivision of a 3-connected graph and not a parallel graph whose branching vertices 
have degree at least three.

Next we define `para-paths', which are similar to para-cycles and analyse them. 
A path in a simplicial complex $C$ is a \emph{para-path} if 
\begin{enumerate}
 \item the link graphs at all interior vertices of $P$ are parallel graphs, where the branching 
vertices have degree at least three;
\item the link graphs at the two endvertices of $P$ are subdivisions of 3-connected graphs.
\end{enumerate}

\begin{lem}\label{Bm_path}
Let $C$ be a stretched out simplicial complex\footnote{In this paper we follow the convention that 
every edge of a simplicial complex is incident with some face.} with a para-path $P$. Then the 
complex $C'$ obtained from $C$ by contracting all edges of the path $P$ has at most one loop.
\end{lem}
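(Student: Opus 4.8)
The plan is to translate the statement about loops of $C'$ into a statement about \emph{chords} of the para-path $P=x_0x_1\cdots x_k$. Since $C$ is a simplicial complex, contracting all edges of $P$ identifies $x_0,\dots,x_k$ to a single vertex $v$, and an edge of $C$ becomes a loop at $v$ precisely when it is not itself an edge of $P$ yet has both endvertices on $P$; call such an edge a \emph{chord} of $P$. As any two vertices of a simplicial complex span at most one edge, the only possible chord joining two of the endvertices $x_0,x_k$ of $P$ is the single edge $x_0x_k$. Hence it suffices to show that \emph{every chord of $P$ has both endvertices in $\{x_0,x_k\}$}: this yields at most one chord, and therefore at most one loop of $C'$.

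First I would record the basic dictionary between faces and link graphs: for an edge $g$ incident with a vertex $x$, the number of faces of $C$ containing $g$ equals the degree, in the link graph at $x$, of the vertex representing $g$; in particular this number does not depend on which endvertex of $g$ is used to compute it. Using this I claim that at each interior vertex $x_i$ the two path edges $x_{i-1}x_i$ and $x_ix_{i+1}$ are exactly the two branch vertices of the parallel link graph at $x_i$. Indeed, a parallel graph whose branching vertices have degree at least three has minimum degree at least two, so no edge of $C$ at $x_i$ lies in fewer than two faces; and if a path edge lay in exactly two faces then, since both of its endvertices carry a link that is either a parallel graph with branching vertices of degree at least three or a subdivision of a $3$-connected graph, the stretched-out hypothesis would be violated at that edge. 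Thus each path edge lies in at least three faces, i.e.\ represents a vertex of degree at least three, and the only such vertices of a parallel graph are its two branch vertices.

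Now let $g$ be a chord with at least one interior endvertex, say $x_i$. By the previous step the two branch vertices of the link graph at $x_i$ are already used up by the two path edges, so $g$ represents a non-branch vertex there, which has degree exactly two; hence $g$ lies in exactly two faces. But then the stretched-out condition demands that $g$ have an endvertex whose link is neither a subdivision of a $3$-connected graph nor a parallel graph with branching vertices of degree at least three. Both endvertices of $g$ lie on $P$, so each is either an interior vertex of $P$ (parallel link with branch-degree $\geq 3$) or an endvertex of $P$ (link a subdivision of a $3$-connected graph); in every case precisely the forbidden options occur. This contradiction shows that no chord has an interior endvertex, so every chord joins $x_0$ to $x_k$, as required.

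The main obstacle is the second paragraph: pinning down that the path edges occupy precisely the two high-degree (branch) slots of each interior link graph, since this is exactly what forces an arbitrary chord to be a degree-two vertex and hence to lie in only two faces, triggering the stretched-out condition. Everything else is a direct application of the definitions. One should also dispose of the degenerate case in which $P$ is a single edge (no interior vertices), where $x_0x_k$ is an edge of $P$ rather than a chord, so that $P$ has no chords at all and $C'$ has no loops.
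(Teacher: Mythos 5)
Your proof is correct and follows essentially the same route as the paper: use the stretched-out condition to show that any edge with an interior endvertex of $P$ cannot have its other endvertex on $P$, and then invoke the simplicial-complex condition to bound the edges between the two endvertices of $P$ by one. Your second paragraph (showing the two path edges occupy the branch-vertex slots at each interior link, so a chord is incident with exactly two faces and the stretched-out hypothesis applies to it) makes explicit a step the paper's proof leaves implicit, which is a welcome addition rather than a deviation.
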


\begin{proof}
 Let $v$ and $w$ be the endvertices of the path $P$. Since $C$ is a simplicial complex, there is at 
most one edge between $v$ and $w$. We will show that no other edge of $C$ becomes a loop in $C'$. 

So let $e$ be an edge of $C$ that has an endvertex $u$ on the path $P$ different from 
$v$ and $w$. Thus the vertex $u$ is an interior vertex of $P$, so $L(u)$ is a parallel graph whose 
branching vertices have degree at least three.
As $C$ is stretched out, the other endvertex $x$ of $e$ has a link graph different from all link 
graphs at vertices on the para-path $P$. Thus $x$ does not lie on the para-path $P$. Thus 
the edge $e$ is not a loop in the simplicial complex $C'$. 
\end{proof}

   \begin{figure} [htpb]   
\begin{center}
   	  \includegraphics[height=2cm]{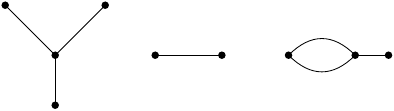}
   	  \caption{A 3-star, an edge and a 2-cycle with an 
attached leaf. Free-graphs are subdivisions of these graphs.}\label{fig:starstarround} 
\end{center}
   \end{figure}

A \emph{free-graph} is a subdivision of a 3-star, a path or a cycle with an attached path, 
see \autoref{fig:starstarround}.
These graphs are `free' in the sense that any rotation system on them defines an embedding in the 
plane. 
A graph is \emph{almost 3-connected} if it is a subdivision of a 3-connected graph, a parallel 
graph or a free-graph. 
A simplicial complex is \emph{locally almost 3-connected} if all its link 
graphs are almost 3-connected. 

\begin{thm}\label{Kura_simply_con2}\label{Kura_almost}
 Let $C$ be a simplicial complex that is locally almost 3-connected and stretched out. The 
following are equivalent.
 \begin{itemize}
  \item $C$ has a planar rotation system;
  \item $C$ has no space minor in $\Zcal\cup \Tcal$. 
 \end{itemize}
\end{thm}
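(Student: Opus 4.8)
The plan is to reduce to the locally 3-connected case established in \cite{3space1}, so that the torus crossing obstructions in $\Tcal$ account exactly for the extra flexibility introduced by the newly allowed parallel-graph links; I would prove the two implications separately. For the forward implication, recall from \cite{3space1} that admitting a planar rotation system is inherited by space minors and that no member of $\Zcal$ admits one, so it suffices to check that no torus crossing obstruction admits a planar rotation system. For such an obstruction the base cycle $o$ is a para-cycle carrying two mega faces of distinct winding number, and the computation in the proof of \autoref{has_obstruction} shows that a loop planar rotation system of the associated contracted link would force all mega faces to share a single winding number. Hence no member of $\Zcal\cup\Tcal$ admits a planar rotation system, and neither does any complex having such a space minor.

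For the backward implication I would, assuming $C$ has no space minor in $\Zcal\cup\Tcal$, build a planar rotation system by contracting away the non-rigid structure. Concretely, I would collapse every para-path and every para-cycle and resolve the flaps sitting at free-graph links, passing from $C$ to a locally 3-connected 3-bounded 2-complex $C^\ast$ (in the sense of \cite{3space1}), at the cost of some loops. \autoref{Bm_path}, which is where the stretched-out hypothesis enters, guarantees that each contracted para-path yields at most one loop, keeping the contractions controlled. Since contraction is a space minor operation and $C$ has no space minor in $\Zcal$, neither does $C^\ast$, so the theorem of \cite{3space1}, applied to $C^\ast$, supplies a planar rotation system of $C^\ast$.

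It remains to lift this rotation system back through the contractions to all of $C$, which is the heart of the matter and the step I expect to be the main obstacle. At free-graph links there is nothing to verify, since every rotation system of a free-graph is planar. Along a para-path the two endvertices carry 3-connected links, whose rotation systems are determined up to reflection, and the parallel interior links allow a compatible rotation to be threaded through; the delicate case is a para-cycle $o$, where the rotation system must close up after a single full turn. The precise obstruction to closing up is non-loop-planarity of the contracted link at $o-e$, and by \autoref{has_obstruction} such non-loop-planarity would produce, by deleting faces, a torus crossing obstruction, contradicting the hypothesis. Hence every contracted link is loop planar, the mega faces around each para-cycle share a common winding number, and the rotation system extends, completing the lift. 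The remaining work is chiefly the bookkeeping reconciling the rotation system of $C^\ast$ with a globally consistent winding around each para-cycle, together with checking that the contractions are genuinely space minors and that the loops they create stay controlled.
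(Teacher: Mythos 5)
Your overall strategy---contract para-paths and para-cycles to reduce to the locally 3-connected machinery of \cite{3space1}, use \autoref{Bm_path} (via the stretched-out hypothesis) to control the loops created, and convert a failure of loop planarity at a contracted para-cycle into a torus crossing obstruction via \autoref{has_obstruction}---is exactly the paper's; the paper packages the contraction-and-reduction step as the trichotomy of \autoref{41_2} and then converts each of its three failure modes into a space minor in $\Zcal\cup\Tcal$. Two of your steps, however, do not work as written.

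First, in the forward implication you justify ``no torus crossing obstruction admits a planar rotation system'' by the computation in the proof of \autoref{has_obstruction}. That computation proves the converse of what you need: under the assumption that all mega faces have equal winding number it constructs one particular loop planar rotation system. It says nothing about what an arbitrary loop planar rotation system would force, so it cannot rule out a planar rotation system when the winding numbers differ; for that you need the separate winding-number/intersection argument sketched in the introduction (or a combinatorial analogue quantifying over all rotators at the two loop-ends). Second, the complex $C^\ast$ obtained by contracting every para-path and every para-cycle is not a locally 3-connected input for the theorem of \cite{3space1}: at a contracted para-cycle the link is a parallel graph, and $C^\ast$ carries a loop for each contracted para-path and para-cycle, so the cited theorem does not apply to it verbatim. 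The paper avoids this by keeping the para-cycle components entirely separate (in \autoref{41_2} they form their own components of the auxiliary graph $H$ and are disposed of by \autoref{has_obstruction} alone, never entering the 3-connected machinery), and by treating each bad para-path individually: \autoref{Bm_path} guarantees a single loop there, which is precisely what the looped-generalised-cone lemmas of \cite{3space1} are designed to handle, yielding a space minor in $\Zcal_1$ or $\Zcal_2$. Your lifting step along para-paths and para-cycles is otherwise the same bookkeeping the paper performs inside \autoref{41_2} via the contraction-equivalence lemmas.
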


As a preparation for the proof of \autoref{Kura_almost}, we prove the following analogue of 
{\cite[\autoref*{rot_system_exists}]{3space1}}. Recall that an edge $e$ is a \emph{chord} of a 
cycle 
$o$ in a 
simplicial complex if $e$ is not in $o$ but joins two vertices of $o$.

\begin{lem}\label{41_2}
  Let $C$ be a simplicial complex that is locally almost 3-connected and stretched out. Then $C$ 
has a planar 
rotation system unless
\begin{enumerate}
 \item $C$ is not locally planar;
 \item there is a para-path $P$ such that $C/P$ is not locally planar at the vertex $P$;
 \item the contraction $C/(o-e)$ is not locally planar, where $o$ is a chordless cycle and $e$ is 
an edge of 
$o$ and $o$ contains an edge aside from $e$.
\end{enumerate}
\end{lem}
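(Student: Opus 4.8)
The plan is to assume that none of (1)--(3) holds and to construct a planar rotation system for $C$ by selecting, compatibly across every shared edge, a planar embedding of each link graph. Since (1) fails, every link graph $L(x)$ is planar, and by local almost 3-connectedness each $L(x)$ is a subdivision of a 3-connected graph, a parallel graph, or a free-graph. These three types differ sharply in the freedom of their planar embeddings: a subdivision of a 3-connected graph embeds essentially uniquely up to reflection (Whitney), a free-graph embeds for \emph{every} rotation system and hence imposes no constraint, while a parallel graph admits a large, flexible family of embeddings. A rotation at an edge $e=xy$ is read off simultaneously from $L(x)$ and from $L(y)$, so the whole task is to resolve the reflection and ordering choices at all vertices at once so that the two link graphs sharing each edge agree there.

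First I would isolate the rigid part. At a vertex whose link is a subdivision of a 3-connected graph the embedding is determined up to a single reflection, so I would set up the resulting binary consistency problem along the edges joining two such vertices, exactly as in the locally 3-connected situation of \cite[\autoref*{rot_system_exists}]{3space1}; the free-graph vertices can simply be ignored. The genuinely new difficulty is the parallel-graph vertices, which by the definitions of para-path and para-cycle organise into maximal chains: such a chain either terminates at 3-connected-link vertices at both ends, giving a para-path $P$, or closes up into a para-cycle $o$.

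For a para-path $P$, the combined constraint imposed by all its interior vertices is encoded by the link graph of the contraction $C/P$ at the vertex $P$. Because $C$ is stretched out, \autoref{Bm_path} guarantees that $C/P$ has at most one loop, so this combined link is of a controlled type; and because (2) fails it is locally planar at $P$. The flexibility of the parallel-graph links is exactly what lets me pull a planar embedding of this combined link back to a consistent choice of rotations at every edge of $C$ meeting $P$, matching the forced embeddings at the two 3-connected endpoints. For a para-cycle the analogous combined constraint is the link graph of $C/(o-e)$ for an edge $e$ of $o$, and here the new feature is the monodromy incurred by travelling once around the cycle: a consistent assignment exists precisely when this combined link is planar, which is ensured by the failure of (3) after passing to a chordless representative cycle. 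This is the step whose failure forces two mega faces of distinct winding number, and hence a torus crossing obstruction, exactly as in \autoref{has_obstruction}.

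Having fixed compatible rotations on all para-paths, para-cycles and free vertices, I would finally solve the binary consistency problem on the remaining rigid skeleton. The main obstacle I expect is precisely this assembly: showing that the rotations forced at the 3-connected endpoints of the contracted flexible regions can be matched to a single global solution, and that every failure to do so is witnessed by the non-local-planarity of one of the three contractions rather than by some subtler global obstruction. Controlling this interface between the flexible parallel regions and the rigid 3-connected vertices --- together with verifying that it suffices to test chordless cycles in (3) and that the single loop permitted by \autoref{Bm_path} causes no trouble --- is where the real content of the lemma lies.
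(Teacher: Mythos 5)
Your overall strategy coincides with the paper's: discard the face-degree-two and free-graph-incident edges from the 1-skeleton, treat each remaining component separately, handle the all-parallel components as para-cycles via outcome (3), contract the chains of parallel-graph-link vertices joining 3-connected-link vertices, and invoke the locally 3-connected machinery of \cite[\autoref*{rot_system_exists}]{3space1} on the result, translating its failure modes into outcomes (1)--(3). However, there are two genuine gaps. First, your taxonomy of the maximal chains of parallel-graph-link vertices is incomplete: besides para-paths (both ends at 3-connected links) and para-cycles (no 3-connected link at all), a chain can close up into a cycle passing through \emph{exactly one} vertex whose link is a subdivision of a 3-connected graph, and it can also dangle, terminating at an edge whose other endvertex has a free-graph link. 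The dangling chains are harmless (the free end absorbs any rotation, which is why the paper deletes them when forming $H''$), but the cycles through a single 3-connected-link vertex become loops after contraction and produce a failure mode your plan cannot see: the link at the contraction vertex can be planar yet not \emph{loop}-planar. The paper converts exactly this case into outcome (3) with a cycle $o$ that is \emph{not} a para-cycle (it contains the 3-connected-link vertex). Since your construction invokes the failure of (3) only for para-cycles, it never addresses this configuration, and it is precisely a place where the assembly can break down even when all links are planar and all para-path and para-cycle contractions are locally planar.

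Second, the step you yourself flag as ``where the real content of the lemma lies'' --- that every failure of the global assembly is certified by one of the three local contractions and not by some subtler obstruction --- is the theorem, and the proposal does not prove it. The paper resolves it by an explicit reduction: form $H''$, contract all but one edge of each connecting path and of each cycle through a single $X$-vertex to obtain a 2-complex $C'$ in which the $X$-vertices retain subdivisions of 3-connected graphs as links and whose loops are controlled (here stretched-out-ness and \autoref{Bm_path} are used), then rerun the proof of \cite[\autoref*{rot_system_exists}]{3space1} on $C'$ and pull each of its three failure outcomes back through the contractions to the corresponding outcome for $C$. Without carrying out this reduction, or an equivalent one, the proposal remains a plan rather than a proof.
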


\begin{proof}
We obtain $H$ from the 1-skeleton of $C$ by deleting all edges of $C$ that are incident with 
precisely two faces or such that the link graph at one endvertex is a free-graph. 
Let $H'$ be a connected component of $H$. We say that a rotation system of $C$ is \emph{planar at 
$H'$} if it is planar at all vertices of $H'$. 
In order to show 
that $C$ has a planar rotation system, it suffices to 
construct for each connected component $H'$ of $H$ a rotation system of $C$ that is 
planar at $H'$.
Indeed, since the rotators at vertices of degree two are unique, 
we can combine these rotation systems for the different components of $H$ to a planar rotation 
system of $C$.  And if the link graph at one endvertex $v$ of an edge $e$ is a free-graph, we can 
just change the rotator at $e$ in $L(v)$ so as to be the reverse of the rotator at the link graph 
at $e$ in the other link graph containing $e$.

First assume that $H'$ just consists of a single vertex. 
Either $C$ has a rotation system that is planar at $H'$ or the link graph of $C$ at the single 
vertex of $H'$ 
is not loop planar. That is, we have the first outcome of the 
lemma. 

Note that vertices whose link graphs are free-graphs are included in this case, as they do not 
have 
any outgoing edges in $H$. 

Next assume that all link graphs at vertices of $H'$ are parallel-graphs. Since we may assume that 
$H'$ contains at least two vertices, each branching vertex of such a parallel graph has degree 
at least three; and each vertex of $H'$ is incident with precisely two edges (which are the 
branching vertices in its link graph). So the connected graph $H'$ is a cycle $o$. In fact, it is a 
para-cycle. 

Our aim is to show that there is a rotation system  planar at $H'$ or we get outcome 3 from the 
lemma. For that we contract the edges of $o$ one by one until a single edge $e$ remains. 
After each contraction one gets a para-cycle with one fewer vertex. 
Similarly as {\cite[\autoref*{contr_pres_planar}]{3space1}} one proves that there is a 
rotation 
system planar at $H'$ before the contraction of a single edge 
if and only if there is such a  planar rotation 
system after the contraction. Thus $C$ has a rotation system planar at $H'$ or the 
2-complex $C/(o-e)$ is not loop planar at $o-e$. That is, we have the third outcome 
of the lemma, as the cycle $o$ of the stretched out simplicial complex $C$ is chordless. 

\vspace{.3cm}

Thus it suffices to consider the case that $H'$ contains a vertex whose link 
graph is a subdivision of a 3-connected 
graph.
Let $X$ be the set of those vertices of $H'$ where the link graph is a subdivision of a 3-connected 
graph. 

\begin{rem}
 If all vertices of $H'$ are in $X$, the proof of {\cite[\autoref*{rot_system_exists}]{3space1}} 
extends almost verbatim to this case. We reduce the general case to 
{\cite[\autoref*{rot_system_exists}]{3space1}} as follows. 
\end{rem}

Vertices of $H'$ not in $X$ have parallel graphs at their links. These 
vertices have degree one or two in $H'$. (Indeed, as they are in $H'$ they must be 
parallel 
graphs whose (two) branching vertices have degree at least three.) 

Thus the graph $H'$ consists of the set $X$ together with some paths between these vertices, 
or cycles and paths attached at single vertices of $X$. These paths starting at a 
single vertex of $X$ must have a deleted edge $d$ of degree three incident with their last vertex. 
So the link graph at the other endvertex of $d$ is a free graph. 
We obtain $H''$ from $H'$ by deleting all the paths attached at a single vertex of $X$; here we 
stress that we do not delete their starting vertex in $X$ and we do not delete attached cycles. 
Note that there is a rotation system that is planar for $H'$ if and only if there is a rotation 
system that is planar for $H''$. 

Let $C'$ be the simplicial complex obtained from $C$ 
contracting all but one edge from every path of $H''$ between two vertices of $X$ or every cycle of 
$H''$ containing precisely one vertex of $X$. 
Then $C'$ is a 3-bounded 2-complex such that the link graph at every vertex of $X$ is a subdivision 
of a 3-connected graph. 
As $C$ is stretched out, no edge of degree two is a loop in $C'$. The 2-complex $C'$ has one loop 
for every cycle of $H''$ containing a single vertex of $H$, and no further loops by construction.

As contractions of non-loops and their inverse operations preserve the 
existence of planar rotation systems in locally 2-connected 2-complexes by 
{\cite[\autoref*{contr_pres_planar}]{3space1}} and local 
2-connectivity is preserved by 
contraction by {\cite[\autoref*{sum_3con}]{3space1}}, there is a planar rotation system 
for $H''$ in $C$ if and only 
if there is a planar rotation system for $X$ in $C'$. The same proof of as that of 
{\cite[\autoref*{rot_system_exists}]{3space1}} gives that there is a planar rotation system for $X$ 
in $C'$ unless one of the following occurs.
\begin{enumerate}
\item $C'$ is not locally planar;
 \item there is a non-loop $e$ of $C'$ such that $C/e$ is not locally planar at the vertex $e$;
 \item the contraction $C'/(o-e)$ is not locally planar, where $o$ is a chordless cycle and $e$ is 
an edge of 
$o$ and $o$ contains an edge aside from $e$.
\end{enumerate}
In the first case, let $v'$ be the vertex of $C'$ whose link graph is not loop-planar. 
Let $v$ be the unique vertex of $X$ contracted onto $v'$. If the link graph at $v$ of $C$ is not 
planar, we have outcome 1 of \autoref{41_2}. 
Hence we may assume that this is not the case, and so the link graph $L(v)$ is planar, and so also 
$L(v')$ is planar -- but not loop-planar. As the link graph $L(v')$ is a subdivision of a 
3-connected graph, there is a single loop $e$ of $C'$ that witnesses that the link graph $L(v')$ is 
not loop planar. Let $o$ be the unique cycle of $H''$ containing $e$. Then the 2-complex 
$C/(o-e)$ is not loop planar at the contraction vertex. Hence we have outcome 3 of \autoref{41_2}, 
as the cycle $o$ of the stretched out simplicial complex $C$ is chordless. 

In the second case, the edge $e$ is a path of $C$ all whose interior vertices have parallel graphs 
at their links. Hence we get a para-path as in outcome 2 of 
\autoref{41_2}.

In the third case, each edge of the cycle $o$ is a path of $C$, and all these paths together form 
a cycle of $C$. This cycle has no chord as $o$ has no chord. We pick an arbitrary 
edge on the path for $e$, and we get outcome 3 of \autoref{41_2}.
\end{proof}

\begin{proof}[Proof of \autoref{Kura_simply_con2}.]
By \autoref{has_obstruction} we may assume that $C$ has no para-cycle $o$ such that for some 
edge $e$ of $o$ the contraction $C/(o-e)$ is not loop planar at the vertex $o-e$.

Next we treat the case that $C$ has a para-path $P$ such that the link graph $L(P)$ of $C/P$ 
at 
$P$ is not loop planar. The link graph $L(P)$ is the vertex-sum of the link graphs at the vertices 
of $P$. Thus it is a subdivision of a 
3-connected graph by {\cite[\autoref*{sum_3con}]{3space1}}. 
By \autoref{Bm_path}, $C/P$ has at most one loop, which is incident with $L(P)$. 
By  
{\cite[\autoref*{reduce_to_looped_cone1}]{3space1}} or 
{\cite[\autoref*{reduce_to_looped_cone2}]{3space1}} $C'$ has a space minor that is a generalised 
cone or a looped generalised cone that is not loop 
planar at its top, respectively.  In the 
first case we deduce by 
{\cite[\autoref*{gen_cone}]{3space1}} that $C'$ has a space minor in $\Zcal_1$. In the second case 
we deduce similarly as in the last paragraph of the proof of  
{\cite[\autoref*{rot_minor}]{3space1}} 
 that 
$C'$ has a space minor in $\Zcal_2$.

Having treated the above cases the rest of the proof of \autoref{Kura_simply_con2} is analogue 
to the proof of  {\cite[\autoref*{rot_minor}]{3space1}} 
except that we 
refer to \autoref{41_2} instead of {\cite[\autoref*{rot_system_exists}]{3space1}}.
\end{proof}

\section{Streching local 2-separators}\label{s2}

In this section we define stretching at local 2-separators and prove basic properties of this 
operation. 
This operation is necessary for \autoref{main_streching}.  

A \emph{2-separator} in a 2-connected graph\footnote{In this paper we will only consider 
2-separators of link graphs of simplicial complexes; such link graphs do not have parallel 
edges or loops. For multigraphs, it seems suitable to also consider $(a,b)$ a 2-separator 
if there are two parallel edges between them and $L-a-b$ is 
not empty or $a$ and $b$ have three parallel edges in between. } $L$ is a 
pair of 
vertices $(a,b)$ such that 
$L-a-b$ has at least two connected components.

Given a simplicial complex $C$ with a vertex $v$ such that its link graph $L(v)$ is 2-connected  
and has a 2-separator $(a,b)$, the simplicial 
complex $C_2$ obtained from $C$ by 
\emph{stretching $\{a,b\}$ 
at $v$} is defined as follows, see \autoref{fig:s_pair}.

   \begin{figure} [htpb]   
\begin{center}
   	  \includegraphics[height=4.5cm]{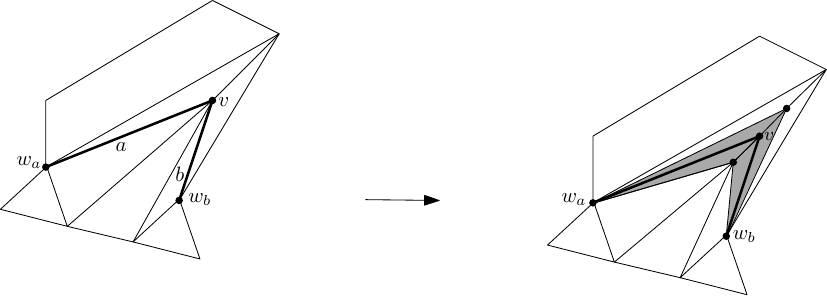}
   	  \caption{If we stretch the highlighted pair of edges in the simplicial complex on the 
left, 
we obtain the one on the right. The newly 
added faces are 
depicted in grey.}\label{fig:s_pair} 
\end{center}
   \end{figure}

   \begin{figure} [htpb]   
\begin{center}
   	  \includegraphics[height=1cm]{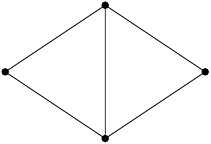}
   	  \caption{The simplicial complex $\Delta_2$.}\label{fig:Delta2} 
\end{center}
   \end{figure}

We denote by $\Delta_2$ the simplicial complex obtained from two disjoint faces of size three by 
gluing them together at an edge, see \autoref{fig:Delta2}. 
Let $\Delta^+_n$ be the simplicial complex obtained by gluing $n$ copies of $\Delta_2$ 
together at a path of length two whose endvertices have degree two in $\Delta_2$ (this is uniquely 
defined up to isomorphism), see \autoref{fig:Delta2N}. 
   \begin{figure} [htpb]   
\begin{center}
   	  \includegraphics[height=2cm]{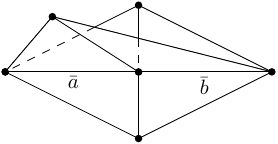}
   	  \caption{The simplicial complex $\Delta^+_3$ with the gluing edges  
labelled $\bar a$ and $\bar b$.}\label{fig:Delta2N} 
\end{center}
   \end{figure}

Informally, we obtain $C_2$ from $C$ by replacing the edges $a$ and $b$ by $\Delta^+_n$, where $n$ 
is the 
number of components of $L(v)-a-b$. 
More precisely, the simplicial complex $C_2$ is defined as follows. 
Let $n$ be the number of components of $L(v)-a-b$. 
We denote the gluing edges of $\Delta^+_n$ by 
$\bar a$ and $\bar b$. We label the vertices of $\Delta^+_n$ incident with neither $\bar a$ nor 
$\bar b$ by the components of $L(v)-a-b$.

In our notation we suppress a bijection between vertices of $C$ and $\Delta^+_n$ as follows.
We label the common vertex of the edges of $\bar a$ and $\bar b$ by $v$.
We denote the endvertex of the edge $a$ in $C$ different from $v$ by $w_a$; and we label the 
endvertex of the edge $\bar a$ different from $v$ by $w_a$. Similarly, we denote the endvertex of 
the edge $b$ in $C$ different from $v$ by $w_b$; and we label the 
endvertex of the edge $\bar b$ different from $v$ by $w_b$.
\begin{itemize}
 \item The vertex set of $C_2$ is union of the vertex set of $C$ together with 
the vertex set 
of $\Delta^+_n$, in formulas: $V(C_2)=V(C) \cup V(\Delta^+_n)$. We stress that the sets $V(C)$ and 
$V(\Delta^+_n)$ share the vertices $v$, $w_a$ and $w_b$ and hence these vertices appear in $V(C_2)$ 
only once as $V(C_2)$ is just a set and not a multiset;
\item the edge set of $C_2$ is (in bijection with) the edge set of $C$ with the edges $a$ and 
$b$ replaced by 
the set of edges of $\Delta^+_n$, in formulas: $E(C_2)=\left (E(C)-a-b\right ) \cup E(\Delta^+_n)$.
The incidences between vertices and edges are as in $C$ or $\Delta^+_n$, except for those edges of 
$C$
that have the vertex $v$ as an endvertex.
This defines all incidences of edges except those of $C$ that have the endvertex $v$.
Given an edge $x$ of $C$ incident with $v$, and denote its other endvertex by $x'$.
Then its corresponding edge of $C_2$ has the endvertices $x'$ and the vertex of $\Delta^+_n$ that 
is 
the component of $L(v)-a-b$ containing $x$. This completes the definition of the edges of $C_2$. 
We stress that the vertex $w_a$ of $C_2$ is incident with those edges of $C-a-b$ with endvertex 
$w_a$ and 
those edges of $\Delta^+_n$ with endvertex $w_a$;
\item the faces of $C_2$ are the faces of $C$ together with the faces of $\Delta^+_n$; in formulas: 
$F(C_2)=F(C)\cup F(\Delta^+_n)$. We stress that the sets $F(C)$ and $F(\Delta^+_n)$ are disjoint. 
The incidences between edges and faces are as in $C$ or $\Delta^+_n$, where defined. This defines 
all 
incidences of faces except for those faces $f$ of $C$ incident with the edges $a$ or $b$, which are 
defined as follows. 
There are three cases:

-- if $f$ is a face of $C$ incident with both the edges $a$ and $b$, then in $C_2$ these incidences 
are replaced by incidences with the edges $\bar a$ and $\bar b$; 

-- if $f$ is a face of $C$ incident with the edge $a$ but not $b$, then in $C_2$ the incidence of 
$f$ with $a$ is replaced with an incidence with the edge $w_ax$ of $\Delta^+_n$; where 
$x$ 
is the component of $L(v)-a-b$ such that in $L(v)$ the edge $f$ joins $a$ with a 
vertex of $x$;

-- similarly, if $f$ is a face of $C$ incident with the edge $b$ but not $a$, then in $C_2$ the 
incidence of 
$f$ with $b$ is replaced with an incidence with the edge $w_bx$ of $\Delta^+_n$; where 
$x$ 
is the component of $L(v)-a-b$ such that in $L(v)$ the edge $f$ joins $b$ with a 
vertex of $x$.
\end{itemize}

This completes the definition of stretching a 2-separator at a 
vertex. 

We refer to the vertices of $C_2$ that are not in $V(C)-v$ as the \emph{new vertices}, other 
vertices of $C_2$ are called \emph{old}.

The link graph at $w_a$ of $C$ is obtained 
from the link graph at $w_a$ in $C_2$ by contracting all edges incident with the vertex $\bar a$. 
Note that $w_a$ cannot be incident with $b$ as $C$ is a simplicial complex. 

\begin{eg}
 In \autoref{fig:link2} we explain how the link graphs of 
\autoref{fig:s_pair} change.
   \begin{figure} [htpb]   
\begin{center}
   	  \includegraphics[height=3cm]{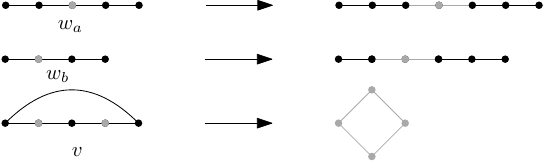}
   	  \caption{On the left we see the link graphs at the vertices $w_a$, 
$w_b$ and $v$ of the simplicial complex in \autoref{fig:s_pair}. On the right we see the link 
graphs after the stretching at $(a,b)$. The vertices $a$ and $b$ and the new vertices and edges in 
the link graphs are depicted in grey. }\label{fig:link2} 
\end{center}
   \end{figure}
   
\end{eg}

The \emph{(abbreviated) degree-sequence} of a graph is the sequences of degrees of its vertices, 
ordered by size, where we leave out the degrees which are at most two. We compare degree-sequences 
in the lexicographical order.

\begin{lem}\label{2-stretch-change}
 Let $C_2$ be a simplicial complex obtained from $C$ by stretching the 2-separator $(a,b)$ at $v$.
 Then at each vertex of $C$ aside from $v$, the degree-sequence at its link in $C_2$ is at most 
the degree-sequence at its link in $C$.
At all new vertices of $C_2$ the degree-sequence at the link is strictly smaller than the 
degree-sequence of the link graph at $v$ in $C$ -- unless the link graph at $v$ in $C$ is a 
parallel 
graph or $L(v)-a-b$ has two components and one is a path. 
\end{lem}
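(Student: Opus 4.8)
The plan is to compute every relevant link graph of $C_2$ explicitly in terms of $L=L(v)$ and the decomposition of $L-a-b$ into its components $H_1,\dots,H_n$, and then to read off and compare degree-sequences. Write $d_a=\deg_L(a)$, $d_b=\deg_L(b)$, let $d_a^i$ (resp.\ $d_b^i$) be the number of edges of $L$ from $a$ (resp.\ $b$) into $H_i$, and let $[ab\in E(L)]$ be $1$ if $a,b$ are adjacent in $L$ and $0$ otherwise. Since $L$ is $2$-connected, each $H_i$ sends at least one edge to each of $a$ and $b$, so $d_a^i,d_b^i\ge 1$; moreover $d_a=\sum_i d_a^i+[ab\in E(L)]$ and similarly for $d_b$. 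Because $L$ is the link graph of a simplicial complex it is simple, so at most one face of $C$ is incident with both $a$ and $b$. From $d_a^i\ge 1$ and $n\ge 2$ one reads off the two inequalities used throughout: $n+[ab\in E(L)]\le d_a$ and $1+d_a^i\le d_a$ (and symmetrically for $b$).

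First I would dispose of the old vertices. For an old vertex $u\notin\{v,w_a,w_b\}$ the only effect of the stretching is that the edge $vu$ of $C$ (if present) is renamed $x_iu$, where $H_i$ is the component containing it; this is a pure renaming of one vertex of the link, so $L_{C_2}(u)\cong L_C(u)$ and the degree-sequence is unchanged. For $u=w_a$ (and symmetrically $w_b$), the description recalled just before the statement shows that $L_{C_2}(w_a)$ arises from $L_C(w_a)$ by replacing the single vertex $a$ (of degree $d_a$) by the star centre $\bar a$, of degree $n+[ab\in E(L)]$, together with leaves $(w_ax_i)$ of degree $1+d_a^i$, every other vertex keeping its degree. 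By the inequalities above every degree created at $w_a$ is at most $d_a$, and replacing one entry of a degree-sequence by finitely many entries each no larger than it cannot increase the sequence in the lexicographic order. This yields the first assertion.

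For the new vertices the same explicit computation gives the key structural facts. The graph $L_{C_2}(v)$ is the complete bipartite graph on $\{\bar a,\bar b\}$ and $\{x_1,\dots,x_n\}$, plus the edge $\bar a\bar b$ precisely when $ab\in E(L)$; hence its degree-sequence is $(n,n)$ or $(n+1,n+1)$. For each $i$, $L_{C_2}(x_i)$ is the subgraph of $L$ induced on $H_i\cup\{a,b\}$ with the edge $ab$ (if present) deleted and replaced by a path $a$--$(vx_i)$--$b$; thus every vertex of $H_i$ keeps its $L$-degree, the copy of $a$ has degree $1+d_a^i$, the copy of $b$ has degree $1+d_b^i$, and there is one further degree-$2$ vertex. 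Since $d_a,d_b\ge n+[ab\in E(L)]$ and $1+d_a^i\le d_a$, $1+d_b^i\le d_b$, and since the vertices seen by $x_i$ form a sub-collection of those of $L$, both these degree-sequences are at most that of $L$; so the desired inequality always holds at least weakly.

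It remains to prove strictness outside the two exceptions, and \emph{this equality analysis is the main obstacle}. For $v$, equality forces $d_a=d_b=n+[ab\in E(L)]$ and every vertex of every $H_j$ to have degree at most two; this means each component is a path attached to $a$ at one end and to $b$ at the other, i.e.\ $L$ is a parallel graph with branch vertices $a,b$. For $x_i$, equality forces both that no vertex of any other component has degree exceeding two and (comparing the largest surviving entries, and separately treating the case where $a$ and $b$ already have degree two) that $1+d_a^i=d_a$; since each of the $n-1$ other components contributes at least one edge to $d_a$, this is possible only when $n=2$ and the remaining component attaches exactly once to each of $a,b$ and carries no vertex of degree exceeding two — that is, when $n=2$ and that component is a path. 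I would carry out this case analysis with care for the degrees $\le 2$ (which are dropped from the sequence) and for the term $[ab\in E(L)]$; these bookkeeping points, rather than any conceptual difficulty, are where the argument must be handled most carefully.
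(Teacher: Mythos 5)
Your proposal is correct and takes essentially the same route as the paper's (much terser) proof: old vertices are handled by observing that the stretching only coadds a star in their links, and the new vertices are handled by computing their links explicitly and comparing degree-sequences, with the two exceptional cases emerging from exactly the equality analysis you sketch. One caveat: the general principle you invoke for the old vertices --- ``replacing one entry of a degree-sequence by finitely many entries each no larger than it cannot increase the sequence in the lexicographic order'' --- is false as stated (e.g.\ $(5,3)$ becomes the lexicographically larger $(5,3,3)$ if the entry $3$ is replaced by two entries $3,3$). It is saved here by the stronger fact, which follows from your own identities $\sum_i d_a^i+[ab\in E(L)]=d_a$ and $d_a^i\ge 1$, $n\ge 2$: if any new degree at $w_a$ equals $d_a$, then all the other new degrees are at most $2$ and hence vanish from the \emph{abbreviated} sequence, so the sequence either strictly decreases or is unchanged. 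This is exactly the content of the paper's one-line claim that coadding a star cannot increase the abbreviated degree-sequence, and is worth making explicit.
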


\begin{proof}
Coadding a star at a vertex cannot increase the abbreviated degree-sequence, hence the lemma is 
true at old vertices of $C_2$. 
So it remains to prove the lemma for the new vertices of $C_2$ as it is obvious at the 
others. As the link graph $L(v)$ at $v$ in $C$ is not a parallel graph, the degree-sequence at the 
link at $v$ in $C_2$ is strictly smaller than that in $C$.
Now let $X$ be a component of $L(v)-a-b$. If $L(v)-a-b$ has at least three components, then  
 the degree-sequence at the 
link at $X$ in $C_2$ is strictly smaller the degree-sequence of the link graph at $v$ in 
$C$. This is also true if $L(v)-a-b$ has only two components and the other component has a vertex 
of degree greater than two; that is, is not a path. This completes the proof of the lemma.  
\end{proof}

The \emph{degree-parameter} of a 2-complex is the sequence of degree-sequences of all its link 
graphs, ordered by size.  We compare degree-parameters 
in the lexicographical order.

\begin{lem}\label{stretch_loc_con}
 Let $C$ be a simplicial complex such that all link graphs are 2-connected or free-graphs. Then we 
can apply stretchings at local 
2-separators of 2-connected link graphs such that the resulting simplicial complex is locally 
almost 3-connected. 
\end{lem}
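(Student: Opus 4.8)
The plan is to argue by induction on the degree-parameter. Concretely, I would show that whenever $C$ is not yet locally almost 3-connected there is a single stretching at a local 2-separator of a 2-connected link graph after which the degree-parameter strictly decreases while the property that every link graph is 2-connected or a free-graph is preserved. Since the degree-parameter takes values in a lexicographically ordered set of (finite, entrywise bounded) sequences of abbreviated degree-sequences, strictly decreasing chains terminate; and the process can only stop once every link graph is almost 3-connected, which is the assertion of the lemma.

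So assume $C$ is not locally almost 3-connected. As free-graphs are almost 3-connected, the hypothesis forces the offending link graph $L(v)$ to be 2-connected, and as it is not almost 3-connected it is neither a subdivision of a 3-connected graph nor a parallel graph (a 2-connected free-graph is a cycle and hence already parallel). The heart of the proof is choosing the 2-separator correctly, so as to avoid the degenerate situation in which a component of $L(v)-a-b$ is a path --- the exceptional case of \autoref{2-stretch-change}, in which the degree-sequence at a new vertex need not drop. To this end I would pass to the graph $L'$ obtained from $L(v)$ by suppressing all vertices of degree two. Then $L'$ is 2-connected with minimum degree at least three; since $L(v)$ is not a subdivision of a 3-connected graph, $L'$ is not 3-connected, and since $L(v)$ is not parallel, $L'$ is neither a cycle nor a single pair of vertices joined by parallel edges, so $L'$ has at least three vertices. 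Hence $L'$ admits a 2-separator $(a,b)$. As $a$ and $b$ are branch vertices of $L(v)$, the pair $(a,b)$ is also a 2-separator of $L(v)$, and at least two components of $L(v)-a-b$ each contain a vertex of degree at least three in $L(v)$ (namely the pullbacks of the at least two components of $L'-a-b$).

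I would then stretch $\{a,b\}$ at $v$ and check that the degree-parameter strictly decreases. By \autoref{2-stretch-change} the abbreviated degree-sequence is non-increasing at every old vertex, and strictly smaller than that of $L(v)$ at the new vertex $v$ (indeed the new link at $v$ is a parallel graph); it remains to verify the same strict inequality at the other new vertices $q_X$. For such a vertex, associated to a component $X$, the link graph is $L(v)[X\cup\{a,b\}]$ with an extra degree-two vertex joined to $a$ and $b$; that is, up to suppressing this vertex, it is the torso $L(v)[X\cup\{a,b\}]$ together with the edge $ab$. Every vertex of this torso has degree at most its degree in $L(v)$, whereas the torso omits all vertices of the other components, and by the previous paragraph these include a vertex of degree at least three. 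Thus the abbreviated degree-sequence at $q_X$ is obtained from that of $L(v)$ by deleting at least one entry that is at least three and possibly lowering further entries, so it is strictly smaller than the degree-sequence of $L(v)$. Since in the sorted degree-parameter we have removed the degree-sequence of $L(v)$ and inserted only strictly smaller ones while increasing no other entry, the degree-parameter strictly decreases; this is precisely where the choice of $(a,b)$ through $L'$ pays off.

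Finally I would confirm that the induction hypothesis persists: the new link at $v$ is a parallel graph, the new links at the vertices $q_X$ are 2-connected (torsos with a subdivided edge), and the links at $w_a$, $w_b$ and at the other endvertices of the edges through $v$ arise by coadding a star, an operation which (as one checks) keeps each such link 2-connected or a free-graph. Applying the induction hypothesis to the resulting complex then completes the proof. The one genuinely delicate point --- and the main obstacle --- is the selection of the 2-separator: a naive choice can split off a path and stall the degree-parameter, and it is the passage to the suppressed graph $L'$ that guarantees a vertex of degree at least three on each relevant side and hence a strict decrease. The remaining steps are routine bookkeeping with \autoref{2-stretch-change}.
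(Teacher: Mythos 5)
Your overall strategy coincides with the paper's: repeatedly stretch at a well-chosen local 2-separator, use \autoref{2-stretch-change}, and run a well-founded induction on the degree-parameter; you also correctly isolate the danger of splitting off a path component. However, the step where you produce the 2-separator has a genuine gap. You pass to the suppressed multigraph $L'$ and claim that, being 2-connected, not 3-connected and on at least three vertices, it admits a 2-separator. This fails in two ways. First, if $L'$ has exactly three branch vertices --- e.g.\ $L(v)$ a subdivision of the triangle with every edge doubled --- then $L'-a-b$ is a single vertex for every pair, so $L'$ has no 2-separator at all. Second, ``$L(v)$ is not a subdivision of a 3-connected graph'' only rules out $L'$ being a \emph{simple} 3-connected graph; $L'$ may have parallel edges and vertex-connectivity at least three (e.g.\ $K_4$ with one edge doubled and one copy of that edge subdivided), and again $L'$ has no 2-cut. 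In both situations $L(v)$ is 2-connected, not a parallel graph and not a subdivision of a 3-connected graph, so the lemma must handle it; yet every 2-separator of $L(v)$ consists of two branch vertices $a,b$ with several connections between them, and all but one component of $L(v)-a-b$ is a subdivision path. Your key claim that at least two components each contain a vertex of degree at least three is therefore unattainable in these cases, and your strict-decrease argument at the new vertices $q_X$ has nothing to delete.

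The repair is exactly what the paper's notion of a \emph{proper} 2-separator is calibrated for: in the problematic cases choose $a,b$ to be branch vertices joined by at least two connections in $L'$. Then either $L(v)-a-b$ has at least three components, or it has two and $ab$ is an edge of $L(v)$; in either case the degrees of $a$ and $b$ themselves drop strictly in the torso of the large component (each loses at least one edge, to a subdivided parallel connection or to the edge $ab$), while the path components become cycles with empty abbreviated degree-sequence. So the strict decrease still holds, but by a different mechanism from the one you give. The remainder of your write-up --- the well-ordering of degree-parameters, preservation of the hypothesis at old vertices via coadding stars, and the strict drop at the new vertex $v$ --- is fine and usefully fills in details that the paper's own two-line proof leaves implicit (in particular the unproved assertion that a proper 2-separator exists).
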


Before we prove this, we need a definition. A 2-separator $(x,y)$ in a graph $G$ is \emph{proper} 
unless $G-x-y$ has precisely 
two components 
and one of them is a path and $xy$ is not an edge.

\begin{proof}
 If $C$ has a 2-connected link graph that is not a parallel graph or a 
subdivision of a 3-connected graph, it contains a proper 2-separator and 
we stretch at that 2-separator.
Link graphs at other vertices remain 2-connected or free graphs, respectively.
By \autoref{2-stretch-change} the degree-parameter goes down and hence this process has to stop 
after finitely many steps -- with the desired simplicial complex. 
\end{proof}

\vspace{.3cm}

Until the rest of this section we fix a simplicial complex $C$ with a vertex $v$ such that the link 
$L(v)$ is 2-connected and let $(a,b)$ be a 2-separator of $L(v)$. We 
denote the simplicial complex obtained 
from $C$ by stretching $(a,b)$ at $v$ by $C_2$. 

\begin{rem}\label{inverse2}
 $C$ can be obtained from $C_2$ as follows. First we contract the edges incident with the 
vertex $v$ except for $\bar a$ and $\bar b$. We relabel $\bar a$ by $a$ and $\bar b$ by $b$. 
We obtain some faces of size two, we refer to these faces as \emph{tiny} faces. Then we contract 
all these tiny faces. This gives $C$.  
\end{rem}

We say that an operation, such as contracting an edge, is an \emph{equivalence} for a property, 
such as the existence of a planar rotation systems, if a simplicial complex has that property if 
and only if the simplicial complex after applying this operation has this property.

In {\cite[\autoref*{contr_pres_planar}]{3space1}} it is 
shown that contracting a non-loop edge where the link graph at both endvertices are 2-connected is 
an equivalence for the 
existence of planar rotation systems. Contracting a face of size two is not always an equivalence 
for the 
existence of planar rotation systems but here the contracted faces have the following additional 
property.

A face $f$ incident with only two edges $e_1$ and $e_2$ is \emph{redundant} if there is a vertex 
$v$ incident with $f$ such that in $C/f$ in any planar rotation system of the link graph $L(v)$
at the rotator at $f$, the edges incident with $e_1$ in the link at $v$ for $C$ form an 
interval. (This implies that also the edges incident with $e_2$ in the link at $v$ for $C$ form an 
interval.)

The following is obvious.
\begin{obs}\label{is_eq}
Let $C'$ be obtained from $C$ by contracting a redundant face.
If $C'$ has a planar rotation system, then $C$ has a planar rotation system.
\qed
\end{obs}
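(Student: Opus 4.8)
The plan is to recover a planar rotation system of $C$ from one of $C'=C/f$ by undoing the contraction of $f$ at the level of link graphs. Since $f$ is a face with exactly the two edges $e_1$ and $e_2$, its boundary is a $2$-gon, so $f$ is incident only with the two endvertices of $e_1,e_2$, say $v$ and $v'$; contracting $f$ merges $e_1,e_2$ into a single edge $e$ and affects only the link graphs $L(v)$ and $L(v')$, leaving every other link graph, together with its rotator, literally unchanged.

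First I would fix a planar rotation system of $C'$ and examine the induced planar embedding of the link graph of $C'$ at $v$. Writing $A$ for the faces incident with $e_1$ other than $f$ and $B$ for the faces incident with $e_2$ other than $f$, the merged vertex $e$ has incident edges exactly $A\cup B$, and its rotator is some cyclic ordering of $A\cup B$. By the redundancy hypothesis this rotator has $A$ (hence also $B$) as a contiguous interval, so it reads $(a_1,\dots,a_p,b_1,\dots,b_q)$ up to rotation. I would then build a rotation system of $C$ by keeping every rotator at an edge other than $e_1,e_2$ unchanged (the faces incident with such an edge are the same in $C$ and $C'$) and setting the rotator at $e_1$ to be $(a_1,\dots,a_p,f)$ and at $e_2$ to be $(b_1,\dots,b_q,f)$. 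Combinatorially this is precisely the vertex split of $e$ into $e_1,e_2$ joined by the new edge $f$, carried out along the interval $A$; since splitting a vertex along a contiguous arc of its rotator preserves planarity, the result is a planar embedding of $L_C(v)$.

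The point that requires care is planarity at the second endvertex $v'$, about which the redundancy hypothesis says nothing directly. Here the key observation is that the rotator at the merged edge $e$ is a single cyclic ordering shared by both endvertices of $e$: at each of $v$ and $v'$ it is just the cyclic list of faces incident with $e$, read off the common rotator of the edge $e$ in $C'$ (possibly reversed). Being a contiguous interval is invariant under cyclic rotation and under reversal, so $A$ is automatically an interval in the rotator at $e$ inside the link of $v'$ as well. Consequently the identical vertex split applies at $v'$ and yields a planar embedding of $L_C(v')$, using exactly the rotators $(a_1,\dots,a_p,f)$ and $(b_1,\dots,b_q,f)$ already fixed at $e_1,e_2$. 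As the link graphs at all other vertices are unaffected and remain planar, these rotators together form a planar rotation system of $C$, as required. (When $e_1,e_2$ are loops at a single vertex $v=v'$, the face $f$ meets only one vertex and the same argument applies, using only the redundancy at $v$.)
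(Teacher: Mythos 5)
Your proof is correct, and it fills in exactly the argument the paper omits (the paper states \autoref{is_eq} as ``obvious'' with no proof): one splits the merged vertex along the interval guaranteed by redundancy, which is the inverse of contracting the size-two face and hence preserves planarity of the link embeddings. The one point genuinely worth spelling out --- that the interval condition, assumed only at one endvertex $v$, transfers to the other endvertex $v'$ because the rotator at an edge is a single cyclic order shared (up to reversal) by the link graphs at both of its endvertices --- is precisely the observation you make, so there is no gap.
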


\begin{obs}\label{tiny}
Tiny faces (as defined in \autoref{inverse2}) are redundant. 
\end{obs}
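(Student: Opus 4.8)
The plan is to verify the definition of redundancy directly, taking the witnessing vertex to be $v$ and exploiting that the stretching was engineered so that the link at $v$ becomes a parallel graph. First I would recall from \autoref{inverse2} the precise origin of a tiny face: after contracting the edges $vq_i$ of $\Delta^+_n$, each triangle $vq_iw_a$ collapses to a bigon bounded by $\bar a$ and the image of the edge $q_iw_a$; write $c_i$ for this image and $f=f_i$ for this tiny face. The tiny faces coming from the triangles $vq_iw_b$ are symmetric, with $\bar b$ and $c_i':=q_iw_b$ in place of $\bar a$ and $c_i$. Thus $f_i$ is a bigon whose two bounding edges $\bar a$ and $c_i$ both join $v$ to $w_a$, and I would check redundancy at the vertex $v$.

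Next I would pin down the link graph $L(v)$ in the complex in which $f_i$ is contracted. By construction of the stretching, the component $q_i$ of $L(v)-a-b$ reappears inside the link at $v$, attached to the remainder only through the vertex $c_i$ (which now plays the former role of $a$) and the vertex $c_i'$ (playing the role of $b$); moreover $c_i$ is joined to $\bar a$ by the single link-edge $f_i$, and $c_i'$ is joined to $\bar b$ by $f_i'$. Hence in $L(v)$ the vertices $c_i$ and $c_i'$ together separate this copy of component $q_i$ from everything else, the only link-edges leaving it being $f_i$ at $c_i$ and $f_i'$ at $c_i'$.

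The core step is then a planarity argument. Contracting $f_i$ merges $\bar a$ and $c_i$ into a single vertex $m$ of the link, and I claim $\{m,\bar b\}$ is a $2$-separator of $L(v)$ in the contracted complex, one side of which is exactly the copy of component $q_i$ together with $c_i'$: this side is attached to the rest only at $m$ (via the former edges of $c_i$ into component $q_i$) and at $\bar b$ (via $f_i'$). The edges incident with $c_i$ in $L(v)$ of the uncontracted complex, other than $f_i$ itself, are precisely the link-edges at $m$ entering this side. I would then invoke the standard fact that in any planar rotation system of a graph, for a $2$-separator $\{m,\bar b\}$ the edges at $m$ entering a fixed side form a contiguous interval of the rotator at $m$. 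This is exactly the interval condition required by the definition of redundancy, so $f_i$ is redundant at $v$; the tiny faces $f_i'$ are handled symmetrically, witnessing again at $v$ with the separator $\{\bar a,m'\}$.

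The step I expect to be the main obstacle — and the one deserving most care — is the planarity fact at the end together with checking that $\{m,\bar b\}$ genuinely separates. For the separation one must confirm that distinct components $q_i,q_j$ of $L(v)-a-b$ meet only along $\{\bar a,\bar b\}$ after the contractions, which is where the assumption that $(a,b)$ is a proper $2$-separator and that the relevant link graphs are $2$-connected (as in the setting of \autoref{stretch_loc_con}) is used; the interval property for a side of a $2$-separator then follows from $2$-connectivity in the usual way. Finally I would note that this local separator structure persists when the other tiny faces are contracted first, so the tiny faces may be removed in any order and the redundancy hypothesis of \autoref{is_eq} is available at each step.
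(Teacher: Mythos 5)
Your argument is correct and is essentially the paper's proof in expanded form: the paper also witnesses redundancy at the vertex $v$ and invokes the same standard fact, namely that in any planar embedding of the $2$-connected graph $L(v)$ the edges between $a$ and a component $X$ of $L(v)-a-b$ form an interval of the rotator at $a$; your reformulation via the $2$-separator $\{m,\bar b\}$ of the link after contracting a single tiny face is just a more explicit rendering of this. One small correction: properness of the $2$-separator $(a,b)$ is neither assumed in this section nor needed here --- the $2$-connectivity of $L(v)$, the standing hypothesis fixed just before \autoref{inverse2}, is what yields the interval property.
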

\begin{proof}
 Let $X$ be a component of $L(v)-a-b$. Since $L(v)$ is 2-connected, the edges between $a$ and $X$ 
form an interval in any rotator at $a$ for any embedding of $L(v)$ in the plane. The same is true 
for `$b$' in place of `$a$'.
\end{proof}

\begin{lem}\label{PRS2}
The simplicial complex $C$ has a planar rotation system if and only if the simplicial complex 
$C_2$
has a planar rotation system.
\end{lem}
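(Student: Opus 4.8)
The plan is to prove \autoref{PRS2} by following the explicit reconstruction of $C$ from $C_2$ recorded in \autoref{inverse2}: first contract every edge incident with $v$ other than $\bar a$ and $\bar b$, relabel $\bar a,\bar b$ as $a,b$, and then contract the resulting tiny faces. It therefore suffices to track the existence of a planar rotation system through each of these two kinds of operation, handling the two implications of the lemma separately where necessary.

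For the edge contractions I would invoke {\cite[\autoref*{contr_pres_planar}]{3space1}}, which makes contracting a non-loop edge whose endvertices have $2$-connected link graphs an equivalence for the existence of a planar rotation system. The link graph $L(v)$ in $C_2$ is exactly the parallel graph with branch vertices $\bar a,\bar b$ and one length-two path through each new vertex $X_i$; since $(a,b)$ is a $2$-separator there are $n\geq 2$ such paths, so $L_{C_2}(v)$ is $2$-connected. The link graph at a new vertex $X_i$ is the subgraph of $L_C(v)$ induced by $X_i\cup\{a,b\}$, with $a,b$ renamed $w_a,w_b$ and with a new degree-two vertex $vX_i$ subdividing a virtual $w_aw_b$-edge. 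By the standard fact that in a $2$-connected graph each component of $L-a-b$ together with $a,b$ and the edge $ab$ is $2$-connected, and since subdivision preserves $2$-connectivity, $L_{C_2}(X_i)$ is $2$-connected. Because local $2$-connectivity is preserved under contraction by {\cite[\autoref*{sum_3con}]{3space1}}, the hypotheses of {\cite[\autoref*{contr_pres_planar}]{3space1}} remain satisfied as we contract the edges $vX_i$ one after another, so this first batch of contractions is an equivalence.

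It then remains to treat the tiny faces. By \autoref{tiny} these are redundant, so \autoref{is_eq} immediately gives that if $C$ has a planar rotation system then so does the complex obtained just before contracting them; chained with the previous paragraph this yields the implication from $C$ to $C_2$. For the reverse implication I would argue directly that contracting these particular tiny faces preserves the existence of a planar rotation system. Contracting a face of size two fails to be an equivalence in general precisely because it can turn a second face shared by the two edges into a loop of a link graph; I would rule this out here by checking that the two edges of each tiny face (the running image of $\bar a$ together with the image of some $w_aX_i$, and symmetrically on the $b$-side) have no common face besides the tiny face itself. This holds because a face meeting $\bar a$ meets both $a$ and $b$ in $C$ whereas a face meeting $w_aX_i$ meets $a$ but not $b$, and because distinct components $X_i,X_j$ cannot be joined to $a$ by a single face of $C$. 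Hence no loop is created, each such contraction merely contracts an edge in every affected link graph, and planarity is preserved; this supplies the implication from $C_2$ to $C$.

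The main obstacle I anticipate is this last point, the reverse implication for the tiny faces: \autoref{is_eq} supplies only one direction, and the other direction genuinely fails for general size-two faces, so the crux is the combinatorial verification that the tiny faces arising from stretching never create a loop in a link graph when contracted. A secondary technical point is maintaining the $2$-connectivity hypothesis of {\cite[\autoref*{contr_pres_planar}]{3space1}} throughout the sequence of edge contractions, which I would secure using {\cite[\autoref*{sum_3con}]{3space1}}.
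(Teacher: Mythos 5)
Your proposal is correct and follows essentially the same route as the paper: decompose the inverse of stretching as in \autoref{inverse2}, handle the edge contractions via {\cite[\autoref*{contr_pres_planar}]{3space1}}, and use \autoref{is_eq} together with \autoref{tiny} for the direction from $C$ to $C_2$. The only divergence is that for the remaining direction the paper simply cites {\cite[\autoref*{rot_closed_down}]{3space1}} (contracting a face of size two always preserves the existence of a planar rotation system, so no loop-creation check is needed), whereas you re-derive this by verifying that the tiny faces create no loops; your additional verification of the $2$-connectivity of the link graphs at $v$ and at the new vertices is a detail the paper leaves implicit.
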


\begin{proof}
 It is shown in {\cite[\autoref*{contr_pres_planar}]{3space1}} that contracting a non-loop edge 
where the link graph at both endvertices are 
2-connected is an equivalence for the 
existence of planar rotation systems.
 
By {\cite[\autoref*{rot_closed_down}]{3space1}} contracting a 
face of 
size two preserves the 
existence of planar rotation 
systems. So contracting tiny faces is an equivalence for the existence of planar rotation systems 
by \autoref{is_eq} and \autoref{tiny}.

Hence all the operation that transform the simplicial complex $C_2$ to $C$ as described in  
\autoref{inverse2} are equivalences. Thus stretching at local 2-separators is an equivalence for 
planar rotation systems. 
\end{proof}

The following is geometrically clear, see \autoref{fig:s_pair}, and we will not use it in 
our proofs. 

\begin{lem}\label{stretch_22_embed} If $C$ embeds in 3-space, then also $C_2$ 
embeds in 3-space.
\qed
\end{lem}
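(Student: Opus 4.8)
The plan is to keep the given embedding of $C$ fixed away from $v$ and to rebuild it only near $v$ and along the two edges $a,b$. So fix an embedding of $C$ in 3-space; we may assume it is tame. Then $C$ meets a small ball $B$ centred at $v$ in the cone with apex $v$ over its link graph $L(v)$, and this link is embedded in the sphere $S=\partial B$: each edge at $v$ crosses $S$ in one point, in particular $a$ and $b$ cross $S$ in points $a^\ast,b^\ast$ (the corresponding vertices of $L(v)$) before continuing to $w_a,w_b$, and each face at $v$ meets $S$ in the corresponding edge of $L(v)$. Thus the embedding restricts to a plane embedding of the $2$-connected graph $L(v)$ on $S$.

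First I would read off the cyclic arrangement forced by planarity. By $2$-connectivity of $L(v)$, the edges joining $a^\ast$ to a fixed component $X_i$ of $L(v)-a-b$ form an interval in the rotation at $a^\ast$ (the interval property used in \autoref{tiny}), and likewise at $b^\ast$; planarity forces these two cyclic orders of the components to agree up to reversal. Hence $X_1,\dots,X_n$ acquire a cyclic order around the pair $(a^\ast,b^\ast)$, and one may choose pairwise disjoint arcs $\gamma_1,\dots,\gamma_n$ in $S$ from $a^\ast$ to $b^\ast$, each meeting $L(v)$ only in its endpoints, so that $\gamma_i$ runs between the consecutive components $X_{i-1}$ and $X_i$. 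These arcs cut $S$ into closed lunes $\Lambda_1,\dots,\Lambda_n$ with $X_i\se\Lambda_i$, and coning them from $v$ cuts $B$ into solid sectors $S_1,\dots,S_n$ that meet only along the segments $[v,a^\ast],[v,b^\ast]$ and the disks over the $\gamma_i$, whose interiors are disjoint from $C$.

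Next I would build $C_2$ one sector at a time, realising precisely the incidences in the definition of $C_2$, see \autoref{fig:s_pair}. In the sector $S_i$ I place the new vertex $x_i$ close to $v$, join it to $v$ by a short edge $vx_i$, and run thin parallel copies $w_ax_i$ and $w_bx_i$ of $a$ and $b$ alongside $a$ and $b$ on the $X_i$-side, out to $w_a$ and $w_b$. The page faces $\{v,w_a,x_i\}$ and $\{v,w_b,x_i\}$ are then the thin strips bounded by $a$ and its copy, respectively $b$ and its copy. I re-cone the component $X_i$ from $x_i$ instead of $v$, which slides every edge $vu$ and every face $\{v,u_1,u_2\}$ with $u,u_1,u_2\in X_i$ onto $x_i$; and I reattach each face formerly incident with $a$ (respectively $b$) at a vertex of $X_i$ to the parallel copy $w_ax_i$ (respectively $w_bx_i$), sliding its $v$-corner to $x_i$ at the same time. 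Faces incident with both $a$ and $b$ keep their corner at $v$ and stay on $\bar a=a$ and $\bar b=b$. All material introduced for the index $i$ lies inside $S_i$.

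The hard part is to see that these sector-local modifications assemble into one embedding. Inside a fixed sector the construction is a routine deformation of a cone in a ball, hence embedded; distinct sectors meet only along $[v,a^\ast]$, $[v,b^\ast]$ and the disks over the $\gamma_i$, so since everything new for index $i$ lies in the interior of $S_i$ there are no crossings between sectors. Along the edge $\bar a=a$ the incident faces of $C_2$ are the $n$ page strips together with the unchanged faces incident with both $a$ and $b$; these appear in the cyclic order of the components, which is exactly why, when the faces incident with $a$ at a vertex of $X_i$ are peeled onto the parallel copy $w_ax_i$, the freed angular slots receive the page strips without crossing, and symmetrically along $b$. At $v$ the resulting rotation realises the parallel link graph with branch vertices $\bar a,\bar b$ and the $vx_i$ in cyclic order, as it must. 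The main obstacle is precisely this bookkeeping: checking that the parallel copies and page strips can be inserted simultaneously and compatibly along both $a$ and $b$ and matched up at $v$ and at the $x_i$; once the cyclic order of the components and the interval property of \autoref{tiny} are in hand, each such check is routine, and no further crossings can arise.
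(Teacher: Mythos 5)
The paper offers no proof of \autoref{stretch_22_embed}: the lemma is stated with an empty proof box, declared ``geometrically clear'', and never used in the paper's arguments, so there is no proof of record to compare yours against. Your sector-by-sector construction is, as far as I can tell, a correct and suitably careful realisation of the intended geometric idea: the interval property from \autoref{tiny} at $a^\ast$ and $b^\ast$, the induced cyclic arrangement of the components of $L(v)-a-b$, the lune/sector decomposition of a ball around $v$, and the fact that the cyclic order of the faces around the edge $a$ is constant along $a$ (which is what lets you peel the $X_i$-interval of faces onto the parallel copy $w_ax_i$ near $w_a$ as well as near $v$) are exactly the ingredients one needs, and the resulting complex has precisely the incidences prescribed in the definition of $C_2$ (compare \autoref{fig:s_pair}). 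Two points deserve an explicit line each. First, ``we may assume it is tame'' is not free for topological embeddings of $2$-complexes in $3$-space; you should either restrict the claim to PL embeddings (which is all that is ever needed downstream) or cite the companion paper's equivalence of topological and PL embeddability in the relevant setting. Second, the faces incident with both $a$ and $b$ are direct $a^\ast b^\ast$-edges of $L(v)$ sitting between consecutive component-intervals in the rotation at $a^\ast$; each such face lies in the interior of one of your sectors, so when you place $x_i$, re-cone $X_i$, and insert the page strips inside $S_i$, you must confine the new material to the sub-region of $S_i$ adjacent to $X_i$ so as not to cross these unchanged faces. Neither point affects the substance of the argument; your proposal supplies the argument the paper chose to omit.
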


\begin{rem}
 Also the converse of \autoref{stretch_22_embed} is true. 
\end{rem}

\section{Stretching a local branch}\label{s1}

In this section we define stretching local branches and prove basic properties of this operation. 
This operation is necessary for \autoref{main_streching}.  

Given a connected graph $G$ with a cut-vertex $v$, a \emph{branch} at $v$ is a connected component 
$X$ of $G-v$ together with the vertex $v$ (and all edges between $X$ and $v$). A \emph{branch of 
$G$} is a branch at some cut-vertex of $G$. For any branch $B$, there is a unique vertex $v$ such 
that $B$ is a branch at $v$; we refer to that vertex $v$ as \emph{the cut-vertex} of the branch $B$.

Given a 2-complex $C$ with a vertex $v$ such that the link graph $L(v)$ at $v$ is connected and a 
branch $B$ of $L(v)$, the complex $C[B]$ obtained from $C$ by \emph{pre-stretching} $B$ is 
defined as follows, see \autoref{fig:stretch_branch}. 
We denote the cut-vertex of the branch $B$ by $e$; and remark that $e$ is an edge of the simplicial 
complex $C$.

   \begin{figure} [htpb]   
\begin{center}
   	  \includegraphics[height=5cm]{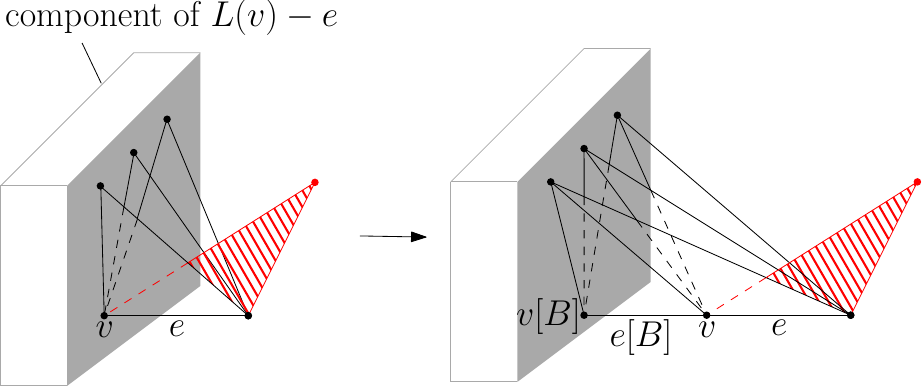}
   	  \caption{The 2-complex on the right is obtained from the 2-complex on the left by 
stretching the branch $B$, which consists of the grey box together with the three faces attaching 
at the grey box. Stretching is defined like pre-stretching but there 
we additionally subdivide faces to make them all have size three.}\label{fig:stretch_branch} 
\end{center}
   \end{figure}

\begin{itemize}
 \item The vertex set of $C[B]$ is that of $C$ together with one new vertex, which we denote 
by $v[B]$, in formulas: $V(C_1)=V(C) \cup \{v[B]\}$;
\item the edge set of $C[B]$ is (in bijection with) the edge set of $C$ together with one 
additional edge, which we denote by $e[B]$, in formulas: $E(C[B])=E(C)\cup \{e[B]\}$;
The incidences between edges and vertices are as in $C$ except for those edges $z\neq e$ of $C$ 
that are vertices of the branch $B$. Such edges are incident with the new vertex $v[B]$ in place of 
$v$, the other endvertex is not changed. The edge $e[B]$ has the endvertices $v$ and $v[B]$;

\item the faces of $C[B]$ are (in bijection with) the faces of $C$; in formulas: 
$F(C[B])=F(C)$. 
The incidences between faces and edges are as in $C$ except for those faces of $C$ that are 
incident with the edge $e$ and are in the link graph $L(v)$ edges of the branch $B$. These faces 
now have size four. They are now additionally incident with the edge $e[B]$. 
\end{itemize}
This completes the definition of pre-stretching the branch $B$ at $v$. 
\emph{Stretching} the branch $B$ is defined the same way except that we additionally subdivide each 
face $f$ of size four once. Namely we add a subdivision-edge between the vertex $v$ and the 
unique vertex of the face that is not in the edge $e$ and different from $v[B]$. Hence for any 
simplicial complex $C$ any stretching at a branch is again a simplicial complex. 

See \autoref{fig:stretch_branch_link} for an 
example illustrating how the link changes at the vertex $v$ and how the link looks like at the 
vertex $v[B]$. 
   \begin{figure} [htpb]   
\begin{center}
   	  \includegraphics[height=4cm]{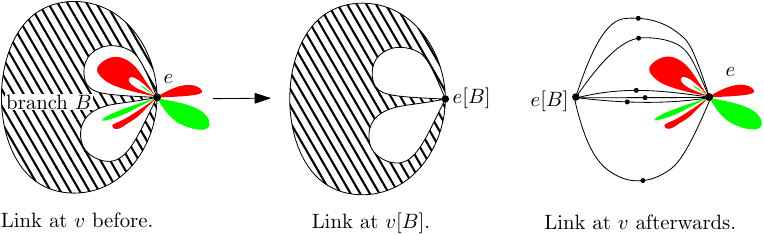}
   	  \caption{On the left we see the link graph of a vertex 
$v$ with a cut-vertex $e$ and a branch $B$. On the right we see the 
link graphs of the two vertices obtained from the link graph of $v$ by stretching 
$B$.}\label{fig:stretch_branch_link} 
\end{center}
   \end{figure}

\vspace{.3cm}
   
Until the rest of this section we fix a simplicial complex $C$ with a vertex $v$ such that the link 
graph $L(v)$ is connected and let $B$ be a branch of that link. We denote the simplicial complex 
obtained 
from $C$ by stretching $B$ by $C[B]$.

\begin{rem}\label{inverse}
The simplicial complex $C$ can be obtained from $C[B]$ as follows. First we contract the edge 
$e[B]$. This makes the 
faces incident with $e[B]$ in $C[B]$ have size two. Then we contract these faces. This gives $C$. 
The 
contracted faces are incident with an edge that is incident with only one other face. 
\end{rem}

   \begin{lem}\label{PRS1}
The simplicial complex $C[B]$ has a planar rotation system if and only if $C$ has a planar rotation 
system.
\end{lem}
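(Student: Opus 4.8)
The plan is to mirror the proof of \autoref{PRS2}. By \autoref{inverse} the complex $C$ is obtained from $C[B]$ by first contracting the edge $e[B]$ and then contracting the size-two faces this creates, so it suffices to show that each of these two operations is an equivalence for the existence of planar rotation systems; chaining the two equivalences then yields that $C[B]$ has a planar rotation system if and only if $C$ does. Throughout I would first record that stretching only alters $L(v)$, the new link at $v[B]$, and the links at the third vertices of the subdivided faces, and that at those third vertices it merely inserts the endvertex of a subdivision edge, which has degree two and hence a unique rotator. Such vertices impose no new planarity constraint and can be ignored, exactly as degree-two vertices were dismissed in the proof of \autoref{41_2}.

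For the face-contraction step I would show that the size-two faces created by contracting $e[B]$ are \emph{redundant} in the sense defined before \autoref{is_eq}. The relevant point is the one already recorded at the end of \autoref{inverse}: each such face is incident with a subdivision edge $s$ that lies on only one other face. Hence at the vertex witnessing redundancy the link-vertex $s$ has degree two, so in any planar rotation system the edges on one side of the bigon trivially form an interval of the rotator at the contracted face. Thus these faces are redundant, and by \autoref{is_eq} together with {\cite[\autoref*{rot_closed_down}]{3space1}} contracting them is an equivalence for the existence of planar rotation systems.

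The edge-contraction step is the main obstacle, precisely because $L_C(v)$ is only assumed connected: the two link graphs $L_{C[B]}(v)$ and $L_{C[B]}(v[B])$ produced by the stretching need not be $2$-connected, so I cannot quote {\cite[\autoref*{contr_pres_planar}]{3space1}}. Instead I would exploit that $e$ is a \emph{cut-vertex} of $L_C(v)$ and that stretching splits $L_C(v)$ at $e$ into precisely $L_{C[B]}(v)$ and $L_{C[B]}(v[B])$, with $e$ replaced by the new link-vertex $e[B]$. Contracting $e[B]$ re-amalgamates these two pieces along this cut structure; up to the harmless degree-two subdivision vertices the resulting link at the merged vertex is again (a subdivision of) $L_C(v)$. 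Since a planar embedding decomposes freely across a cut vertex -- the edges of the two sides occupy disjoint arcs of the rotator and never interleave -- a planar rotation system of $C[B]/e[B]$ restricts to planar rotation systems of both pieces, and conversely any planar rotation systems of the two pieces can be combined at $e$ into one of $C[B]/e[B]$. As all remaining link graphs are unchanged, this gives that $C[B]/e[B]$ has a planar rotation system if and only if $C[B]$ does, so contracting $e[B]$ is an equivalence, and combining the two steps completes the proof. The most delicate point to get right will be the bookkeeping of the split of $L_C(v)$ at $e$ and the verification that re-amalgamating across $e[B]$ is a genuine free cut-vertex sum rather than something that could force two sides to interleave.
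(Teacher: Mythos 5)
Your reduction to the two contraction steps of \autoref{inverse} is the right bookkeeping, and your redundancy argument for the size-two faces is fine (one of their two edges lies on only one other face, so the required interval condition is trivial). The gap is in the edge-contraction step, and it is not a mere delicacy of bookkeeping: your strategy of proving that contracting $e[B]$ is an \emph{equivalence} forces you to lift a planar rotation system of $C[B]/e[B]$ back to one of $C[B]$, i.e.\ to split a planar rotation system of the vertex sum of $L_{C[B]}(v)$ and $L_{C[B]}(v[B])$ along $e[B]$ into planar rotation systems of the two parts that are reverse at $e[B]$. The justification you offer --- that in a planar embedding the edges of the two sides of a cut vertex ``occupy disjoint arcs of the rotator and never interleave'' --- is false. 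Take a triangle and a $3$-star sharing only their common vertex $c$ and draw one leaf inside the triangle: the rotator at $c$ interleaves the two triangle edges with the leaf edges, yet the embedding is planar. What actually rescues the splitting in the present situation is an asymmetric statement: because $B-e$ is a \emph{connected} component of $L_C(v)-e$, every face of the embedded copy of (the subdivision of) $B$ inside the merged link meets the rotator at $e$ in a single angular gap, so each component of the other side of $e$ can be tucked into one such gap; the $B$-side edges themselves need not form an interval. None of this is in your proposal, and without it the step you yourself flag as ``the most delicate point'' is simply missing.

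The paper avoids this difficulty altogether, and you should note how. For the direction ``$C$ has a planar rotation system $\Rightarrow$ $C[B]$ has one'' it does not uncontract anything: it builds the rotation system $\Sigma'$ of $C[B]$ explicitly from $\Sigma$, taking the rotator at $e[B]$ to be the restriction of the rotator at $e$, and verifies planarity at $v$ and $v[B]$ by quoting the fact that a planar rotation system of a graph induces planar rotation systems both on a branch $B$ and on the minor obtained by contracting $B-e$ to a single vertex (the construction after {\cite[\autoref*{minimal_minor}]{3space1}}). For the converse it uses \autoref{inverse} together with only the easy, one-directional facts that contracting a non-loop edge and contracting a face of size two \emph{preserve} the existence of planar rotation systems ({\cite[\autoref*{contr_pres_planar}]{3space1}} and {\cite[\autoref*{rot_closed_down}]{3space1}}); no redundancy and no uncontraction are needed. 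Either repair your splitting argument along the lines above, or switch to the paper's one-directional scheme.
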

   
\begin{proof}
Let $\Sigma$ be a planar rotation system of the simplicial complex $C$. 
We define\footnote{Faces incident with the edge $e$ in $C$ correspond in $C[B]$ either to a single 
face of size three 
or two faces of size three obtained from a face of size four by subdivision. This induces a
bijective map from the faces incident $e$ in $C$ to the faces incident with $e$ in $C[B]$, and an 
injective   
partial map from the faces incident $e$ in $C$ to the faces incident with $e[B]$ in $C[B]$. In 
order to simplify the presentation of the definition we suppress these two maps.} a 
rotation system $\Sigma'$ of the simplicial complex $C[B]$ by taking the same rotator as $\Sigma$ at 
every edge except for $e[B]$ and other new edges, which are incident with two faces.
 At the edges incident with two faces we take the unique cyclic ordering of size two. The rotator 
at the edge $e[B]$ is constructed from the rotator at the edge $e$ for $\Sigma$ by restricting it 
to the faces incident with the edge $e[B]$. 

This rotation system is obviously planar at all vertices of $C[B]$ except for the vertex $v$ and 
$v[B]$. 
We denote by $\Pi$ the rotation system  induced by 
$\Sigma$ of the link graph of $C$ at the vertex $v$. By the construction given 
directly after {\cite[\autoref*{minimal_minor}]{3space1}}, $\Pi$ induces a planar rotation 
system $\Pi_1$ at the branch $B$, and $\Pi$ induces a planar rotation system $\Pi_2$ 
at the minor of the link graph at $v$ in $C$ obtained by contracting $B-e$ to a single 
vertex. 
It is immediate that the rotation system induced by $\Sigma'$ at $v[B]$ is $\Pi_1$, and the 
rotation system induced by $\Sigma'$ at $v$ is $\Pi_2$. 
Hence the rotation system $\Sigma'$ is planar for the simplicial complex $C[B]$.

By {\cite[\autoref*{contr_pres_planar}]{3space1}} contracting an edge preserves the existence of 
planar rotation systems, and by {\cite[\autoref*{rot_closed_down}]{3space1}} contracting a face of 
size two preserves the existence of planar rotation 
systems. Hence by \autoref{inverse} if $C[B]$ has a planar rotation system, then $C$ has a planar 
rotation system.  
\end{proof}

The following is geometrically clear, see \autoref{fig:stretch_branch}, and we will not use it in 
our proofs. 

\begin{lem}\label{stretch_branch_embed} If $C$ embeds in 3-space, then also $C[B]$ 
embeds in 3-space.
\qed
\end{lem}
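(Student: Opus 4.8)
The plan is to deform a given embedding of $C$ purely locally, inside a small ball around the vertex $v$, so as to realise the combinatorial operation of stretching the branch $B$; the part of the embedding away from $v$ is left untouched. This runs backwards the combinatorial description of \autoref{inverse}, where $C$ is recovered from $C[B]$ by contracting $e[B]$ together with the tiny faces it creates: rather than contracting $e[B]$, I build it in by splitting off the branch along a short finger.

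First I would fix an embedding of $C$ in $\Sbb^3$ and choose a ball $N$ around $v$ so small that $N$ meets only the edges and faces of $C$ incident with $v$. Then the intersection of $C$ with the boundary sphere $S=\partial N$ is a crossing-free drawing of the link graph $L(v)$ on $S$ (crossing-free because the faces through $v$ are embedded), and inside $N$ the complex $C$ is the union of cones with apex $v$ over this drawing. Write $p_e$ for the single point where the edge $e$ meets $S$; this is the image in the drawing of the cut-vertex $e$ of $L(v)$. Since the drawing is planar and $B$ is the branch at the cut-vertex $e$, the edges of $B$ at $p_e$ occupy a contiguous arc of the rotation at $p_e$ (branches at a cut-vertex cannot interleave in a planar drawing). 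Hence $B-p_e$ is drawn inside a closed disk $D\subseteq S$ whose boundary meets the remainder of $L(v)$ only at $p_e$.

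Next I would carve off and re-cone the corresponding piece. The cone over $D$ with apex $v$ is a sub-ball $K\subseteq N$ containing exactly the parts inside $N$ of the faces and edges of the branch $B$ (other than $e$), together with the initial segment of $e$; it is attached to the rest of $C$ only along $D$ and along that segment of $e$. Placing a new vertex $v[B]$ inside $K$ very close to $v$ (offset into $D$ so that it avoids the arc of $e$), and taking $e[B]$ to be the segment $[v,v[B]]$, I re-cone the faces of $B$ from the apex $v[B]$ instead of $v$: faces of $B$ not meeting $e$ are re-coned directly, while each face of $B$ meeting $e$ becomes a quadrilateral incident with both $v$ and $v[B]$, together forming a thin collar (a tube around $e[B]$) that closes up the construction. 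As all of this happens inside $K$, no new crossings appear and the embedding outside $K$ is unchanged. Finally I insert the subdivision-edges cutting each collar quadrilateral into two triangles, obtaining an embedding of the simplicial complex $C[B]$ (compare \autoref{fig:stretch_branch} and \autoref{fig:stretch_branch_link}).

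The one point that needs care is that the re-coned branch genuinely fits inside $K$ without meeting the non-branch cones. This is exactly what the disk $D$ provides: because $B-p_e$ and the rest of $L(v)$ lie in complementary regions of $S$, and $K$ is the cone over $D$ alone, the finger and all re-coned faces stay within a region disjoint from the other faces at $v$. Everything else is a routine local isotopy, which is why the statement is geometrically clear and is recorded here without further detail.
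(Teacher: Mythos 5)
The paper records this lemma without proof, declaring it geometrically clear (and it is never used in the paper's arguments), so there is no official proof to compare yours against; what you have written is a correct filling-in of exactly the geometric intuition that \autoref{fig:stretch_branch} is meant to convey. The one substantive step, which you rightly isolate, is combinatorial: in the crossing-free drawing of $L(v)$ on the small sphere $S$ about $v$, the edges of the branch $B$ at the cut-vertex $p_e$ form an interval of the rotation there (two branches cannot interleave at a cut-vertex of a graph drawn in the sphere, by a Jordan-curve argument applied to a cycle through $p_e$ inside one of them), and this is precisely what produces the disk $D$ and hence the sub-ball $K$, the cone over $D$ with apex $v$, inside which the entire modification takes place; re-coning $B$ from a nearby apex $v[B]$ and converting the faces through $e$ into the quadrilaterals of $C[B]$ is then a routine local isotopy, as you say. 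Two small caveats. First, the intersection of $C$ with a small sphere about $v$ is a drawing of the link graph only for a tame (say piecewise-linear) embedding; for a merely topological embedding you should first invoke the equivalence of topological and PL embeddability for 2-complexes established earlier in this series, or simply work with PL embeddings, which is all that is needed here. Second, the faces of $C[B]$ incident with $e[B]$ form an open book whose spine is the bent arc $e\cup e[B]$ rather than a ``tube around $e[B]$''; this is purely a matter of description and does not affect the construction.
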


\begin{rem}
 Also the converse of \autoref{stretch_branch_embed} is true. 
\end{rem}

\section{Increasing local connectivity}\label{s3}

In the first three subsections of this section we define stretchings and prove basic properties; 
these are necessary for \autoref{main_streching}. 
The forth subsection is a preparation for the last subsection, in which we prove 
\autoref{main_streching}, and \autoref{Kura_gen}.  
\subsection{The operation of stretching edges}
Let $C$ be a 2-complex and let $e$ be an edge of $C$ incident with two faces $f_1$ and $f_2$. 
Assume that there is an endvertex $v$ of the edge $e$ such that in any planar rotation system of 
the link graph $L(v)$ at $v$ the edges $f_1$ and $f_2$ are adjacent in the rotator at $e$. 
The complex $C'$ obtained from $C$ by \emph{pre-stretching the edge $e$ in 
the direction of $f_1$ and $f_2$} is obtained from $C$ as follows, see \autoref{fig:stretch_edge}. 
We replace the edge $e$ by two edges new edges $e_1$ and $e_2$, both with the same endvertices as 
$e$. We add a face of size two only incident with $e_1$ and $e_2$. The faces $f_1$ and $f_2$ are 
incident with $e_1$ instead of $e$, all other faces incident with $e$ in $C$ are incident with 
$e_2$ 
instead.
This completes the definition of pre-stretching an edge. \emph{Stretching} an edge is defined the 
same way except that additionally we subdivide the new face of size two 
to obtain a simplicial complex, see \autoref{fig:subdiv}. 

   \begin{figure} [htpb]   
\begin{center}
   	  \includegraphics[height=4cm]{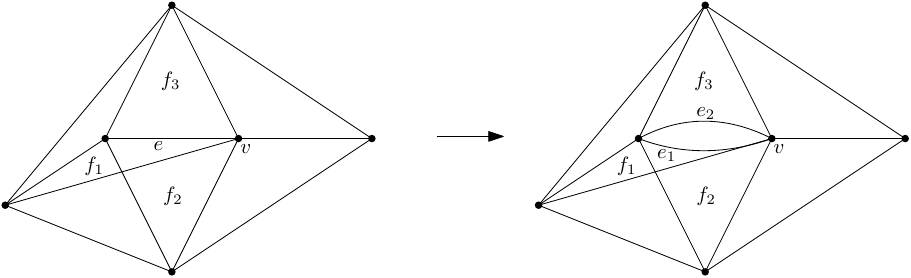}
   	  \caption{The graph on the left defines a simplicial complex by 
adding faces on all cycles of size three. We obtain the 2-complex on the 
right by pre-stretching the edge $e$ in the direction of the faces $f_1$ and 
$f_2$. Its faces are all triangles of the graph on the left except that 
the edge $e_1$ is only incident with the two faces $f_1$ and $f_2$ and the new face  $\{e_1,e_2\}$ 
and the edge $e_2$ is only incident with the face $f_3$ and the new face  
$\{e_1,e_2\}$.}\label{fig:stretch_edge} 
\end{center}
   \end{figure}

      \begin{figure} [htpb]   
\begin{center}
   	  \includegraphics[height=1.3cm]{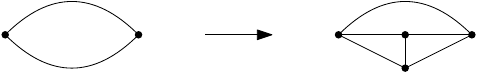}
   	  \caption{Subdivision of a face of size two to a simplicial complex.}\label{fig:subdiv} 
\end{center}
   \end{figure}
      
\begin{eg}
The assumption for stretching an edge $e$ is particularly easy to verify if the link graph $L(v)$ 
is 3-connected. Indeed, then by a theorem of Whitney, we just need to check whether for a 
particular embedding of the link graph $L(v)$ the edges $f_1$ and $f_2$ are adjacent.  
\end{eg}

\begin{rem}\label{rem27}
The inverse operation of pre-stretching an edge $e$ to a face $\{e_1,e_2\}$ is contracting the face 
$\{e_1,e_2\}$ to the edge $e$ as defined in \cite{3space1}.
\end{rem}

\begin{lem}\label{stretch_edge_rotn}
Let $C'$ be obtained from $C$ by pre-stretching an edge $e$. Then $C'$ has a planar rotation system 
if 
and only if $C$ has a planar rotation system.
\end{lem}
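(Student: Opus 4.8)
The plan is to prove the two implications separately. One direction is essentially free from the machinery already set up, and the whole weight of the argument sits in the converse, which I would handle by a direct construction on rotation systems whose validity hinges on a standard vertex-split fact.

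For the direction ``$C'$ has a planar rotation system $\Rightarrow$ $C$ has one'', I would simply appeal to \autoref{rem27}: the complex $C$ is recovered from $C'$ by contracting the size-two face $\{e_1,e_2\}$ back to the edge $e$. Since contracting a face of size two preserves the existence of planar rotation systems by {\cite[\autoref*{rot_closed_down}]{3space1}}, a planar rotation system of $C'$ immediately yields one of $C$. There is nothing more to do here.

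For the converse, suppose $\Sigma$ is a planar rotation system of $C$, and let $v,w$ be the endvertices of $e$. The crucial point is that the rotator $\sigma_e$ at $e$ is a single cyclic ordering of the faces incident with $e$, and it serves (up to reversal) as the rotation at the link-vertex $e$ both in $L(v)$ and in $L(w)$. Because $\Sigma$ is planar it induces a planar rotation system on $L(v)$, so by the hypothesis $f_1$ and $f_2$ are adjacent in $\sigma_e$; as adjacency is invariant under reversal, they are then also adjacent in the rotation at $e$ in $L(w)$. I would then define $\Sigma'$ on $C'$ by retaining every rotator of $\Sigma$ except the one at $e$, which is discarded, and adding rotators at the two new edges $e_1,e_2$ that absorb the new size-two face $n=\{e_1,e_2\}$: writing $\sigma_e=(f_1,f_2,g_1,\ldots,g_k)$ with $f_1,f_2$ consecutive, I would set the rotator at $e_1$ to be $(f_1,f_2,n)$ and the rotator at $e_2$ to be $(g_1,\ldots,g_k,n)$.

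It then remains to check that $\Sigma'$ is planar at every vertex. For any vertex $u\notin\{v,w\}$ the edge $e$ is not a vertex of $L(u)$, and $n$ is incident only with $e_1,e_2$, so $L(u)$ together with its induced rotation is literally unchanged and stays planar. At $v$ (and symmetrically at $w$) the link graph $L'(v)$ is precisely the graph obtained from $L(v)$ by splitting the vertex $e$ into $e_1,e_2$ joined by the new edge $n$, sending the arc $\{f_1,f_2\}$ to $e_1$ and its complement $\{g_1,\ldots,g_k\}$ to $e_2$; the rotators I chose are exactly the ones this split produces. The main obstacle — and the sole place the hypothesis is used — is guaranteeing that $f_1,f_2$ are \emph{forced} to be consecutive in $\sigma_e$, since it is exactly this contiguity that makes the vertex split planarity-preserving (splitting a vertex along a contiguous arc of its rotation keeps a rotation system planar). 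Once that is secured, the induced rotation on $L'(v)$ is planar, so $\Sigma'$ is planar and $C'$ has a planar rotation system. I would present the vertex-split fact cleanly, or reference the analogous construction used after {\cite[\autoref*{minimal_minor}]{3space1}} in the proof of \autoref{PRS1}, and take care with the reversal bookkeeping so that the adjacency verified at $v$ genuinely transfers to $w$.
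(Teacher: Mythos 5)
Your proposal is correct and follows essentially the same route as the paper: the backward implication via \autoref{rem27} and the face-contraction result of \cite{3space1}, and the forward implication by keeping all rotators except at $e$, splitting $\sigma_e$ into the rotators $(f_1,f_2,n)$ and $(g_1,\ldots,g_k,n)$, and observing that on the link graphs at $v$ and $w$ this is a coaddition of an edge (vertex split) along a contiguous arc, which preserves planarity precisely because the hypothesis forces $f_1,f_2$ to be adjacent. Your explicit handling of the reversal between $L(v)$ and $L(w)$ is a minor bookkeeping point the paper leaves implicit.
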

   
\begin{proof}
Let $\Sigma$ be a planar rotation system of the 2-complex $C$. 
We denote the two new edges of the 2-complex $C'$ by $e_1$ and $e_2$. 
We obtain a rotation system $\Sigma'$ of the 2-complex 
$C'$ from $\Sigma$ by taking the same rotators at all edges of the 2-complex $C'$ except for $e_1$ 
and $e_2$. 
By assumption, the faces $f_1$ and 
$f_2$ along which we pre-stretch the edge $e$ are adjacent in the rotator at the edge $e$. 
We define the new rotator at the edge $e_1$ to be the rotator of the edge $e$ restricted to the 
adjacent faces $f_1$ and $f_2$ and we add the new face $\{e_1,e_2\}$ in place of the interval 
formed by the deleted faces. Similarly, we define a 
rotator at the edge $e_2$: we delete from the rotator at $e$ the faces $f_1$ and $f_2$ and add the 
face $\{e_1,e_2\}$ in the interval formed by the two deleted faces.
It remains to check that the rotation system $\Sigma'$ is planar. This is immediate at all vertices 
except for the two endvertices of the edge $e$. 
For the two endvertices, note that pre-stretching the edge $e$ has the effect on the link graph as 
coadding an edge at the vertex $e$. As the edges $f_1$ and $f_2$ of the link graphs are 
adjacent, the coaddition can be done within the embeddings  of the link graphs given  by $\Sigma$. 

By {\cite[\autoref*{rot_closed_down}]{3space1}}, contracting a 
face of size two preserves the existence of planar rotation systems. Hence by \autoref{rem27} if 
$C'$ has a planar rotation system, then $C$ has a planar rotation system.  
\end{proof}

The following is geometrically clear, see \autoref{fig:stretch_edge}, and we will not use it in 
our proofs. 

\begin{lem}\label{stretch_edge_embed}
Let $C'$ be obtained from $C$ by stretching an edge $e$. If $C$ embeds in 3-space, then also $C'$ 
embeds in 3-space.
\qed
\end{lem}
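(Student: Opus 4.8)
The plan is to build the required embedding of $C'$ by modifying a given embedding of $C$ only inside a small neighbourhood of the edge $e$, since stretching alters $C$ nowhere else. So I would start from a (PL) embedding $\iota$ of $C$ in 3-space and pass to a regular neighbourhood $N$ of the embedded arc $\iota(e)$. Relative to $\iota(C)$ this neighbourhood is a $3$-ball meeting the complex in a \emph{book}: its spine is $\iota(e)$ and its pages are the initial segments of the faces incident with $e$. The cyclic order in which the pages sit around the spine is precisely the rotator at $e$ induced by $\iota$, and this order is constant along the arc because $N$ carries a product structure over the interval.

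Next I would bring in the hypothesis. An embedding of $C$ induces a planar rotation system of $C$ (the correspondence established in \cite{3space1}), and restricting to the link graph at the distinguished endvertex $v$, the rotator at the vertex $e$ of $L(v)$ is exactly the page order around the spine, since every face incident with $e$ is also incident with $v$. By assumption $f_1$ and $f_2$ are adjacent in \emph{every} planar rotator at $e$ on the $v$-side, so in the rotator coming from $\iota$ they are cyclically adjacent among the pages of the book.

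The core of the argument is a two-dimensional surgery carried out in a cross-sectional disc $D$ of $N$. In $D$ the spine appears as the centre and each page as a radial arc, the radii occurring in the cyclic page order with those of $f_1$ and $f_2$ consecutive. I would replace the centre by two nearby points $p_1,p_2$, placing $p_1$ in the narrow sector spanned by the radii of $f_1$ and $f_2$, and joining $p_1$ to $p_2$ by a chord $\sigma$ that leaves this sector through the gap between one of $f_1,f_2$ and its neighbouring page; I then reattach the radii of $f_1,f_2$ to $p_1$ and all remaining radii to $p_2$. This can be done without crossings \emph{precisely} because $f_1,f_2$ are adjacent, so $\sigma$ separates their two pages from all the others. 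Sweeping this picture along the interval factor of $N$ yields two disjoint parallel arcs $e_1$ (carrying $f_1,f_2$) and $e_2$ (carrying the remaining faces) together with the swept chord, an embedded bigon incident only with $e_1$ and $e_2$. Outside $N$ the embedding is unchanged, so glueing back produces an embedding of the pre-stretched complex; finally subdividing the bigon (which never destroys an embedding) gives the embedding of the stretched complex $C'$.

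The main obstacle is making the book/regular-neighbourhood structure and the planar reconnection rigorous for the class of embeddings under consideration; this is where I would invoke regular-neighbourhood theory (taking $\iota$ to be PL, as justified in \cite{3space1}), and where the adjacency hypothesis does the real work, since it is exactly the condition guaranteeing that the cross-sectional chord $\sigma$ and the crossing-free rerouting of the radii exist.
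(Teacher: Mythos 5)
The paper does not actually prove this lemma: it is introduced with the sentence ``The following is geometrically clear, see \autoref{fig:stretch_edge}, and we will not use it in our proofs,'' and the tombstone is placed directly after the statement. Your proposal therefore supplies an argument where the paper offers none, and the argument is sound and is the natural way to make the claim rigorous: a PL embedding induces a planar rotation system, so the hypothesis on the endvertex $v$ forces $f_1$ and $f_2$ to be adjacent in the rotator at $e$ coming from the embedding; the regular neighbourhood of $\iota(e)$ meets $\iota(C)$ in a book whose page order is exactly that rotator; and the cross-sectional splitting of the spine into $p_1$ (carrying $f_1,f_2$) and $p_2$ (carrying the rest), joined by a chord, is possible without crossings precisely because the two pages are consecutive. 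It also parallels the paper's own proof of the rotation-system analogue (\autoref{stretch_edge_rotn}), where pre-stretching is described as coadding an edge at the vertex $e$ of the link graphs. Two details should be made explicit in a full write-up, though neither is a gap. First, the product structure of the regular neighbourhood holds only away from the endvertices of $e$, and since $e_1$ and $e_2$ share both endvertices with $e$ you must taper the splitting so that $p_1$ and $p_2$ merge at the two ends of the interval factor; equivalently, carry out the corresponding edge-coaddition inside the cones over $L(v)$ and $L(w)$, which keeps the bigon's boundary equal to $e_1\cup e_2$. Second, the adjacency hypothesis is stated only for the distinguished endvertex $v$; you should note that the rotator at $e$ seen from the other endvertex $w$ is the reverse of the one seen from $v$, and reversal preserves adjacency, so the rerouting also succeeds at the $w$-end.
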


\begin{rem}
 Also the converse of \autoref{stretch_edge_embed} is true. 
\end{rem}

\subsection{The operation of contracting edges}   
An edge $e$ in a 2-complex $C$ is \emph{reversible} if the 2-complex $C$
has a planar rotation system if and only if the 2-complex $C/e$ has a
planar rotation system.

A \emph{para-star} is a graph obtained from a family of disjoint parallel graphs by gluing them 
together at a single vertex. 

\begin{lem}\label{is_rev}
Let $e$ be a non-loop edge with endvertices $v$ and $w$ of a 2-complex $C$
such that the link graphs $L(v)$ and $L(w)$ are para-stars and the
vertex $e$ is a maximum degree vertex in both of them.
Then the edge $e$ is reversible.
\end{lem}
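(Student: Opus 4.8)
The plan is to show that $e$ is reversible by exhibiting how planar rotation systems transfer between $C$ and $C/e$. Since reversibility is the statement that $C$ has a planar rotation system if and only if $C/e$ does, and since contracting a non-loop edge preserves the existence of planar rotation systems by \cite[\autoref*{contr_pres_planar}]{3space1} whenever the link graphs at both endvertices are well-behaved, the nontrivial direction is the converse: from a planar rotation system of $C/e$ we must construct one of $C$. The key structural fact I would exploit is that the link graph $L(e)$ in $C/e$ is the vertex-sum (identification at the vertex $e$) of the two para-stars $L(v)$ and $L(w)$ along the edges at their common maximum-degree vertex $e$. So the first step is to understand precisely how a planar embedding of $L(e)$ in $C/e$ restricts to the two para-star factors $L(v)$ and $L(w)$.

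The crucial observation is that in a para-star, the vertex of maximum degree is exactly the gluing vertex of the constituent parallel graphs, so it plays the role of the common branch vertex. I would argue that any planar rotation system of a para-star is essentially forced up to the cyclic arrangement of the parallel-graph ``bundles'' around the maximum-degree vertex: each parallel graph embeds with its parallel paths in some nested/linear order, and the bundles are then arranged cyclically around $e$. The point of requiring $e$ to be a \emph{maximum-degree} vertex in both $L(v)$ and $L(w)$ is that the rotator at $e$ controls the whole embedding, so that a planar rotation system of the vertex-sum $L(e)$ splits cleanly: the rotator at $e$ in $L(e)$ is a cyclic sequence that can be partitioned into an interval coming from $L(v)$-edges and an interval coming from $L(w)$-edges (or, more carefully, the $L(v)$ and $L(w)$ edges appear in a way compatible with the two factors being glued at $e$), and each factor inherits a planar rotation system.

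Concretely I would proceed as follows. Given a planar rotation system $\Sigma$ of $C/e$, it induces a planar rotation system $\Pi$ on $L(e)$. First I show $\Pi$ restricts to planar rotation systems $\Pi_v$ and $\Pi_w$ on $L(v)$ and $L(w)$; this is where the para-star structure and the maximality of $\deg(e)$ are used, to guarantee that the edges of the two factors are not interleaved at $e$ in an incompatible way but rather can be separated into the two link graphs. Having obtained $\Pi_v$ and $\Pi_w$, I define a rotation system $\Sigma'$ of $C$ by keeping the rotators of $\Sigma$ at every edge other than those affected by splitting the vertex $e$ back into $v$ and $w$, and using $\Pi_v$, $\Pi_w$ as the induced link rotation systems at $v$ and $w$ respectively. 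Then $\Sigma'$ is planar at $v$ and $w$ by construction and planar elsewhere because those rotators are unchanged, so $C$ has a planar rotation system. Combined with the forward direction from \cite[\autoref*{contr_pres_planar}]{3space1}, this gives reversibility.

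The main obstacle I expect is the splitting step: justifying rigorously that a planar embedding of the vertex-sum para-star $L(e)$ forces the $L(v)$-edges and $L(w)$-edges to appear as separable blocks around $e$, so that each factor genuinely inherits a \emph{planar} rotation system. This is exactly where the hypothesis that $e$ has maximum degree in both para-stars must do its work; without maximality, a parallel bundle from $L(w)$ could be nested strictly inside a bundle from $L(v)$ in the plane, making the split ill-defined. I would handle this by appealing to the rigidity of planar embeddings of para-stars (the faces of a parallel graph are bounded by pairs of consecutive paths, so the maximum-degree vertex sees all bundles in a fixed cyclic pattern), reducing the compatibility check to a statement about how two such cyclic patterns can be amalgamated at a shared vertex of the same maximum degree. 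The remaining bookkeeping—transferring rotators at unaffected edges and verifying planarity there—is routine.
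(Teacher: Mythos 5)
Your overall frame agrees with the paper's: the direction from $C$ to $C/e$ is exactly the cited contraction lemma, and the substance is to turn a planar rotation system of $C/e$ into one of $C$ by producing planar rotation systems of $L(v)$ and $L(w)$ from the induced rotation system on the vertex sum $L(e)$. But the step you yourself flag as ``the main obstacle'' is the entire content of the lemma, and your sketch of it rests on a misreading of the vertex sum. In $L(e)$ the vertex $e$ is gone: each face incident with the edge $e$ of $C$ gives one edge at the vertex $e$ in $L(v)$ and one at the vertex $e$ in $L(w)$, and these two are identified into a single edge of $L(e)$ running between a vertex of $L(v)-e$ and a vertex of $L(w)-e$. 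So there is no ``rotator at $e$ in $L(e)$'' to partition, and the faces at $e$ do not split into ``$L(v)$-edges'' and ``$L(w)$-edges'' appearing as separable blocks --- every such face must occur in the rotator at $e$ of \emph{both} link graphs, and the two rotators must be reverse cyclic orders of the same set. The task is therefore not to separate anything but to construct, from scratch, one cyclic order on the faces at $e$ that is simultaneously compatible with a planar embedding of the para-star $L(v)$ and with one of $L(w)$, while keeping the rotators at all other vertices equal to those induced by the given system on $L(e)$. The constraint is that for each branch of $L(v)$ the faces meeting that branch must form an interval of the rotator at $e$, and likewise for each branch of $L(w)$; these two interval structures interact through the bipartite incidences of $L(e)$, and ``rigidity of planar embeddings of para-stars'' does not by itself resolve that interaction.

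This is exactly what the paper's \autoref{sublem28} supplies, by induction on the number of branches of $L(v)$ after suppressing degree-two vertices so that $L(e)$ becomes a bipartite graph between $V(L(v))-e$ and $V(L(w))-e$. The induction step merges two branches of $L(v)$ by identifying two vertices $y,z$ of $L(v)-e$, chosen either in distinct components of $L(e)$ (when $L(e)$ is disconnected, so the identification cannot destroy planarity) or as two neighbours of a common vertex $x$ of $L(w)-e$ joined to it by edges adjacent in the rotator at $x$ (when $L(e)$ is connected). Some mechanism of this kind is needed to exclude the bad interleavings you worry about; without it, your argument asserts the conclusion of the key sublemma rather than proving it, so the proposal is incomplete at its central point.
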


\begin{proof}
By {\cite[\autoref*{contr_pres_planar}]{3space1}}, it suffices to show how any 
planar rotation
system $\Sigma$ on the 2-complex $C/e$ induces a planar rotation
system $\Sigma'$ on the 2-complex $C$.
Letting $\Sigma'$ to be equal to $\Sigma$ at all edges of $C$ not
incident with $v$ or $w$, it suffices to show the following.

\begin{sublem}\label{sublem28}
Let $L(v)$ and $L(w)$ be para-stars and let the vertex
$e$ have maximal degree in both of them. Let $L(e)$ be the vertex sum
of $L(v)$ and $L(w)$ along $e$.
For any planar rotation system $\Pi$ of the graph $L(e)$, there are
planar rotation systems of the graphs $L(v)$ and $L(w)$ that are
reverse of one another at the vertex $e$, and otherwise agree
with the rotation system $\Pi$.
\end{sublem}

\begin{proof}
Throughout we assume in the graphs $L(v)$ and $L(w)$, the vertex $e$
is adjacent to any other vertex. This easily implies the general case
by suppressing suitable degree two vertices as rotators
at such vertices are unique.

We prove this by induction on the number of branches of the graph $L(v)$.
The base case is that the graph $L(v)$ is a parallel graph. Then the
graph $L(e)$ is isomorphic to the graph $L(w)$. So a planar rotation
system on the graph $L(e)$ induces a planar rotation
system on the graph $L(w)$. And there is a unique planar rotation
system on the graph $L(v)$ whose rotator at $e$ is reverse to the
rotator at $e$ in that planar rotation system of $L(w)$.

So we may assume that the graph $L(v)$ has at least two branches.
We split into two cases.

{\bf Case 1:} the graph $L(e)$ is disconnected.
We consider $L(e)$ as a bipartite graph with the vertex set of
$L(v)-e$ on the left and the vertex set of $L(w)-e$ on the right.
As every vertex of $L(e)$ is incident with an edge, there are two
vertices of $L(v)-e$ in different connected components of the
bipartite graph $L(e)$. Denote these two vertices by $y$ and $z$.
We obtain $L(v)'$ from $L(v)$ by identifying the vertices $y$ and $z$
into a single vertex. Denote that new vertex by $u$. We denote the
vertex sum of $L(v)'$ and $L(w)$ along $e$ by $L(e)'$. The
graph $L(e)'$ is equal to the graph obtained from $L(e)$ by
identifying the vertices $y$ and $z$. Thus any planar rotation system
of the graph $L(e)$ induces a planar rotation system of the graph
$L(e)'$ by sticking the rotation systems at the vertices $y$ and $z$
together so that the rotator at the new vertex $u$ contains the edges
incident with the vertex $y$ or $z$, respectively, as a
subinterval.
By induction such a rotation system induces planar rotation systems at
the graphs $L(v)'$ and $L(w)$. This planar rotation system at the
graph $L(v)'$ induces a rotation system on the graph $L(v)$
by splitting the rotator at $u$ into the two subintervals for the
vertices $y$ and $z$. This induced rotation system is planar for
$L(v)$ as the rotators for $y$ and $z$ are subintervals of the
rotator for $u$.

{\bf Case 2:} not Case 1, so the graph $L(e)$ is connected. As above,
we consider $L(e)$ as a bipartite graph, and let a planar rotation
system of the graph $L(e)$ be given. Since the left side
has at least two vertices, there is a vertex on the right of the
connected bipartite graph $L(e)$ that has two neighbours on the left.
Pick such a vertex $x$. We pick neighbours $y$ and $z$ of $x$
in $L(v)-e$ such that there are edges $e_y$ between $y$ and $x$ and
$e_z$ between $z$ and $x$ such that these two edges are incident in
the rotator at the vertex $x$.
Let $L(v)'$ be the graph obtained from $L(v)$ by identifying the
vertices $y$ and $z$ to a single vertex. Call that new vertex $u$. We
denote the vertex sum of $L(v)'$ and $L(w)$ along $e$ by $L(e)'$. The graph $L(e)'$ is equal to the 
graph obtained from $L(e)$ by
identifying the vertices $y$ and $z$. The chosen planar rotation
system of the graph $L(e)$ induces a rotation system for the 
graph $L(e)'$ by sticking the rotation systems at the vertices $y$ and
$z$ together so that the rotator at the new vertex $u$ contains the
edges incident with the vertex $y$ or $z$, respectively,
as a subinterval.  By the choice of $y$ and $z$ this rotation system
is planar. By induction this planar rotation system on $L(e)'$ induces
planar rotation systems on the graphs $L(v)'$ and $L(w)$. This planar rotation system at the graph 
$L(v)'$ induces a
rotation system on the graph $L(v)$ by splitting the rotator at $u$
into the two subintervals for the vertices $y$ and $z$. This induced rotation system is planar for 
$L(v)$ as the rotators for $y$
and $z$ are subintervals of the rotator for $u$.
\end{proof}
To summarise the proof of \autoref{is_rev}, we define the planar rotation system $\Sigma'$ for 
the 2-complex $C$ as indicated above, and we choose the rotators at the edges incident with the 
vertices $v$ or $w$ as induced in the sense of \autoref{sublem28} by the rotation system of the link 
graph $L(e)$ at the vertex $e$ of the 2-complex $C/e$.
\end{proof}

\subsection{The definition of stretching}

We say that a simplicial complex $\tilde C$ is obtained from a simplicial complex $C$ by
\emph{stretching}, if it is obtained from $C$ by applying successively operations of the following 
types:
\begin{enumerate}
 \item stretching local branches at connected link graphs;
 \item 2-stretching at local 2-separators of 2-connected link graphs;
 \item stretching edges;
\item contracting reversible edges that are not loops;
\item splitting vertices. 
\end{enumerate}
We also call $\tilde C$ a \emph{stretching} of $C$. 

\begin{lem}\label{planar_rot_preserve}
 Assume $C'$ is a stretching of $C$. Then $C$ has a planar rotation system if and only if $C'$ has 
a planar rotation system.  
\end{lem}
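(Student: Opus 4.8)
The plan is to argue by induction on the number of elementary operations used to obtain $C'$ from $C$. Write $C=C_0,C_1,\dots,C_k=C'$ for the sequence of simplicial complexes witnessing that $C'$ is a stretching of $C$, where each $C_{i+1}$ arises from $C_i$ by one operation of the five types in the definition of stretching. Since ``has a planar rotation system'' is a fixed property and the relation ``is an equivalence for this property'' is manifestly transitive, it suffices to prove that a single application of each of the five operation types is an equivalence for the existence of planar rotation systems. The lemma then follows by composing these equivalences along the chain $C_0,\dots,C_k$.

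So I would dispatch the five types one at a time. Type~1 (stretching a local branch) is an equivalence by \autoref{PRS1}, and type~2 ($2$-stretching at a local $2$-separator of a $2$-connected link) is an equivalence by \autoref{PRS2}; note that both of these lemmas are stated for the fully subdivided (simplicial) stretching, so their subdivision sub-steps are already absorbed. Type~4 (contracting a reversible non-loop edge) is an equivalence directly by the \emph{definition} of a reversible edge, which says precisely that $C_i$ has a planar rotation system if and only if $C_i/e$ does. Type~5 (splitting a vertex) is the inverse operation of contracting a non-loop edge, so the desired equivalence is exactly {\cite[\autoref*{contr_pres_planar}]{3space1}} read backwards; here I would record that the split is performed so that the link graphs at the two resulting vertices satisfy that lemma's $2$-connectivity hypothesis, so that the inverse edge is of the type covered there.

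The only operation requiring an extra observation is type~3 (stretching an edge), because \autoref{stretch_edge_rotn} only asserts the equivalence for \emph{pre}-stretching an edge, whereas stretching an edge is pre-stretching followed by subdividing the newly created face of size two into a simplicial complex. I would therefore factor this operation: pre-stretching is an equivalence by \autoref{stretch_edge_rotn}, and for the subdivision step I would argue that subdividing the face of size two preserves the existence of planar rotation systems in both directions. The forward direction is immediate since the rotators at the new vertices and new size-two faces are forced; the backward direction follows by contracting the subdivision back down, using {\cite[\autoref*{rot_closed_down}]{3space1}} in the same way it was used in the proofs of \autoref{stretch_edge_rotn} and \autoref{PRS1} (compare \autoref{rem27} and \autoref{inverse}).

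The main obstacle I anticipate is not conceptual but is the bookkeeping around the local-connectivity hypotheses of the cited $3$-space contraction results. The equivalence {\cite[\autoref*{contr_pres_planar}]{3space1}} requires $2$-connected link graphs at both endvertices of the contracted edge, so for type~5 (and for the backward half of the subdivision step in type~3) I must check that the edges being split off or contracted actually meet these requirements, and otherwise fall back on the weaker face-contraction equivalence {\cite[\autoref*{rot_closed_down}]{3space1}}. Once these hypotheses are verified for each operation, every $C_i\to C_{i+1}$ is an equivalence, and the induction closes at once.
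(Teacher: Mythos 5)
Your proposal is correct and follows essentially the same route as the paper: the paper's proof likewise reduces the statement to showing that each of the five operation types is an equivalence, citing \autoref{PRS1}, \autoref{PRS2} and \autoref{stretch_edge_rotn} (noting that the subdivision sub-step of edge-stretching is harmless), the definition of reversibility for type~4, and dismissing vertex-splitting as clearly an equivalence. One small correction: splitting a vertex here is not the inverse of contracting a non-loop edge (no new edge is created); it merely redistributes the edges at a vertex with disconnected link among one new vertex per component, leaving all rotators and face--edge incidences unchanged, so the equivalence is immediate and needs no appeal to {\cite[\autoref*{contr_pres_planar}]{3space1}} or any $2$-connectivity hypothesis.
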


\begin{proof}
In the language introduced above we are to show that all five stretching operations are 
equivalences for the property `existence 
of planar rotation systems'. 
For the first operation it is proved in \autoref{PRS1}, for the second it is proved in 
\autoref{PRS2}, and for the third it is proved in \autoref{stretch_edge_rotn} for pre-stretchings 
of edges, and so the result for stretchings follows. For the forth 
operation it is true by the definition of reversible. 
Splitting vertices is clearly an equivalence for the existence of planar rotation systems.
\end{proof}

\subsection{Increasing the local connectivity a bit}

A 2-complex is \emph{locally almost 2-connected} if all its link graphs are 2-connected or free 
graphs. The following is a key step towards 
\autoref{main_streching}. 

\begin{thm}\label{reduce_to loc_2-con}
Any simplicial complex $C$ has a stretching $C'$ that is a simplicial complex so that $C'$ is 
locally almost 2-connected or $C'$ has a non-planar link graph. 
\end{thm}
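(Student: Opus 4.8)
The plan is to drive up local connectivity by two moves and measure progress by a well-founded parameter until every link graph is $2$-connected or a free-graph. First I would dispose of the non-planar outcome: if some link graph of $C$ is non-planar we are already done with $C'=C$, so assume every link graph of $C$ is planar. The two moves below each replace a link graph by \emph{minors} of it -- the link at a split vertex is a connected component of the old link, and when a branch $B$ is stretched at $v$ the link at $v[B]$ is $B$ while the link at $v$ is the old link with $B-e$ contracted to a point (see \autoref{fig:stretch_branch_link}) -- and they change every other link only by subdividing edges. Hence no move turns a planar link into a non-planar one, so it suffices to reach the first outcome. By \autoref{PRS1} and \autoref{planar_rot_preserve} the result is a genuine stretching of $C$, and using the face-subdividing versions of the operations keeps it a simplicial complex.

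Now the moves. If a link $L(v)$ is disconnected, I would split $v$ into one vertex per connected component of $L(v)$; this leaves every other link unchanged. If a link $L(v)$ is connected but neither $2$-connected nor a free-graph, it has a cut-vertex, so its block-cut tree is nontrivial, and I would pick a leaf block $D'$ with cut-vertex $c$ and stretch the branch $B$ at $c$ whose nontrivial side is $D'-c$. Then the new link $L(v[B])=D'$ is a single block, hence either $2$-connected or a single edge (so $2$-connected or a free path), while $L(v)$ is replaced by the contraction of $D'-c$ to a point, which has one fewer block. The point of peeling a \emph{leaf block} -- rather than, say, a lone pendant edge -- is that it extracts the block as its own acceptable link and genuinely simplifies $L(v)$.

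For termination I would use the lexicographic measure $(d,\delta,b)$, where $d$ is the number of vertices with disconnected link, $\delta$ is the degree-parameter, and $b$ counts the blocks lying in those link graphs that are connected but neither $2$-connected nor free. Splitting strictly decreases $d$; branch-stretching keeps every link connected, so leaves $d$ fixed, and it either strictly decreases $\delta$ -- when the peeled block or the cut-vertex carries a vertex of degree at least three that is thereby removed or lowered, so that one large degree-sequence splits into two strictly smaller ones, in the spirit of \autoref{2-stretch-change} -- or else leaves $\delta$ fixed and strictly decreases $b$, since an acceptable ($2$-connected or free) block is split off and removed from the count. As the measure is well-founded the process halts, and it can only halt when no link is disconnected and no connected link is non-free and non-$2$-connected; that is, when $C'$ is locally almost $2$-connected.

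The main obstacle is the heart of the branch-stretch step: showing that peeling a leaf block always strictly decreases $(\delta,b)$, uniformly across the degenerate configurations -- pendant edges (degree-one vertices of the link), degree-two cut-vertices, and cactus-like remainders whose blocks are all cycles. This is exactly where the definition of a free-graph is calibrated: a connected graph that is neither $2$-connected nor free always contains either two vertices of degree at least three separated by a cut-vertex, forcing the degree-parameter to drop, or a leaf block that becomes a free link after stretching, forcing $b$ to drop. The delicate book-keeping is to compute the effect of the stretch on the degree at the cut-vertex and on the block-cut tree in each case, thereby ruling out the pathological choices (such as peeling a single pendant edge) that would leave the measure unchanged.
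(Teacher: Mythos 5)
There is a genuine gap: vertex splitting and branch stretching alone cannot reach local almost 2-connectedness, and your termination measure has fixed points. Consider a vertex $v$ whose link $L(v)$ is three cycles glued at a single vertex $c$ (a star of parallel graphs with cutvertex of degree six). This is connected, not 2-connected and not a free-graph, and its only vertex of degree at least three is $c$. Every branch at $c$ is one of the cycles; stretching such a branch $B$ makes $L(v[B])$ a cycle (fine), but the new $L(v)$ is the old one with $B-c$ contracted to the point $e[B]$, i.e.\ the two remaining cycles plus a new (subdivided) 2-cycle through $c$ and $e[B]$ --- again three cycles glued at $c$, with $\deg(c)$ still six. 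So $d$, the degree-parameter and your block count $b$ are all unchanged, and the procedure loops forever. Your claimed dichotomy (``two vertices of degree $\geq 3$ separated by a cut-vertex, or a leaf block whose removal drops $b$'') fails exactly here: peeling the leaf block leaves a new block behind. This is not a corner case; the paper's own \autoref{stretching_loc_1-cuts} shows that branch stretching terminates precisely at links that are 2-connected or \emph{stars of parallel graphs}, and those whose cutvertex has degree at least four are neither 2-connected nor free.

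Eliminating these residual configurations is where the bulk of the paper's proof lives and where the operations you never invoke become essential: an induction on the cutvertex-degree, using edge stretchings and 2-stretchings to make the complex ``$a$-structured'' (\autoref{be_nice}), then building paths of parallel-graph links between stars of parallel graphs (\autoref{path_exists}, \autoref{cutvx}) and contracting them via reversible edges (\autoref{is_rev}) so that the two high-degree cutvertices cancel in the vertex-sum and the cutvertex-degree strictly drops. This also explains why the non-planar-link outcome cannot be disposed of at the outset as you do: it is generated \emph{mid-proof}, because the edge-stretching operation is only defined when two faces are adjacent in the rotator of \emph{every} planar rotation system of $L(v)$, and the certificate for this (Whitney's theorem in Case 1 of \autoref{be_nice}) requires $L(v)$ to be planar. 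Your observation that the moves preserve planarity of links is correct but beside the point --- planarity is a precondition for the moves you are missing, not merely an invariant of the ones you use.
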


Before we prove \autoref{reduce_to loc_2-con}, we need some preparation. 
A \emph{star of parallel graphs} is a graph that is not 2-connected and is obtained from a set of 
disjoint parallel graphs by 
gluing them together at a single vertex. 
\begin{eg}
The only parallel graphs that are stars of parallel graphs are paths. Stars of parallel graphs are 
2-connected para-stars. 
\end{eg}

\begin{lem}\label{stretching_loc_1-cuts}
 Let $C$ be a simplicial complex that is locally connected. Then there is a simplicial complex 
$\tilde{C}$ that is obtained 
from $C$ by stretching local branches such that every link graph of $\tilde C$ is 2-connected 
or a 
star of parallel graphs. 
\end{lem}

\begin{proof}
We will prove this by induction. The base case is that every link graph is 2-connected or a star of 
parallel graphs. Next we consider the case that each link graph has at most one cut-vertex. Let $v$ 
be a vertex of the simplicial complex $C$ such that its link graph has a cut-vertex $e$. Then all 
branches of $e$ are 2-connected graphs. We stretch all branches of $e$ that are not parallel 
graphs, one after the other. Then the link at $v$ becomes a star of parallel graphs and all other 
new link graphs are 2-connected. The old link graphs, those at vertices of $C$ aside from $v$,  
do not change except possibly for subdividing edges\footnote{The vertices $v'$ of $C$ where those 
subdivisions occur are those such that there is an edge $e'$ between $v$ and $v'$ such that 
$e'$ is a vertex of one of the branches we stretch.}. We apply this recursively to all 
link graphs 
with cut-vertices, and so reduce this 
case to the base case. 

Next suppose that there is a vertex $v$ such that its link graph has at least two cut-vertices. 
Let $e_1$ be an arbitrary cut-vertex of that link graph. And let $B$ be a branch of $e_1$ 
containing another cut-vertex $e_2$. Then we stretch $B$. All link graphs at vertices of $C$ 
aside from $v$ are not changed (except for possibly subdividing edges). The vertex $v$ is replaced 
by two new vertices. Each cut-vertex of 
the link graph of $v$ is in precisely one of the two new link graphs, and $e_1$ and $e_2$ are in 
different link graphs. Hence both new link graphs have strictly less cut-vertices than the link 
graph of $v$. Hence we can apply induction (on the sequence of numbers of cut-vertices of link 
graphs, ordered by size and compared in lexicographical order).

\
\end{proof}

\begin{proof}[Proof of \autoref{reduce_to loc_2-con}.]
The \emph{cutvertex-degree} of a simplicial complex $C$ is 
the maximal degree of a cutvertex of a link graph of the simplicial complex $C$.
We prove \autoref{reduce_to loc_2-con} by induction on the cutvertex-degree.
So let $C$ be a simplicial complex with cutvertex-degree $a$. 

We obtain $C_1$ from $C$ by splitting all vertices whose link graphs are disconnected. The 
simplicial complex $C_1$ is locally connected.
By \autoref{stretching_loc_1-cuts} there is a stretching $C_2$ of the simplicial complex $C_1$ such 
that all its link graphs are 2-connected or stars of parallel graphs. 
 
If the cutvertex-degree $a$ is at most three, then all link graphs of $C_2$ are 
2-connected or stars of parallel graphs where the unique cut-vertex has degree at most three. 
Graphs of the second type are always free, see 
\autoref{fig:starstarround}. This completes the proof if  the cutvertex-degree is at most 
three, so from now on let the cutvertex-degree $a$ be at least four. 

We say that a simplicial complex $C$ is \emph{$a$-nice} if all its link graphs are 2-connected, or 
stars of parallel graphs whose cutvertex has degree precisely $a$ or else have maximum degree 
strictly less than $a$. For example the simplicial complex $C_2$ is $a$-nice. 
We say that a simplicial complex $C$ is \emph{$a$-structured} if all its link graphs are 
parallel graphs, or stars of 
parallel graphs whose cutvertex has degree precisely $a$ or else have maximum degree strictly less 
than $a$. For example, every $a$-structured simplicial complex is $a$-nice.

\begin{sublem}\label{be_nice}
Assume $C_2$ is $a$-nice. 
 There is a stretching $C_3$ of $C_2$ that is $a$-structured or else has a non-planar link.
\end{sublem}

\begin{proof}
We prove this sublemma by induction on the degree-parameter as defined in \autoref{s2}.
So let $C_2$ be an $a$-nice simplicial complex such that all $a$-nice simplicial complexes with 
strictly smaller degree-parameter have a stretching that is $a$-structured or has a non-planar 
link.

We may assume that the simplicial complex $C_2$ is not $a$-structured; that is, it has a vertex $v$ 
such that the link graph $L(v)$ is 2-connected but no parallel graph and $L(v)$ has vertex $e$ of 
degree at 
least $a$. Also we may assume that $L(v)$ is planar. 

{\bf Case 1:} the vertex $e$ is not contained in a proper 2-separator of $L(v)$.
Since embeddings of 2-connected graphs in the plane are unique up to flipping at 2-separators by a 
theorem of Whitney, any embedding of the graph $L(v)$ in the plane has the same rotator at the 
vertex $e$ (up to reversing). Take two edges $f_1$ and $f_2$ incident with the vertex $e$ that 
are adjacent in the rotator. 
Now we stretch the edge $e$ of $C_2$ in the direction of the faces corresponding to $f_1$ and 
$f_2$. The link graphs at all vertices of $C_2$ except for $v$ and the 
other 
endvertex $w$ of the edge $e$ of $C_2$ do not change. In the link 
graphs for $v$ and $w$ the vertex $e$ is replaced by two new vertices (and a path of 
length 
two 
joining them), each of strictly smaller 
degree than $e$, as its degree $a$ is at least four. This new simplicial complex $C_3$ has strictly 
smaller degree-parameter than 
$C_2$.

In order to be able to apply induction, we need to show that $C_3$ is $a$-nice. The link graph at 
$v$ is still 2-connected in $C_3$. If the link graph at $w$ in $C_2$ is 2-connected, this is 
still true in $C_3$. Hence it remains to consider the case that it is a star of parallel 
graphs. In this case the vertex $e$ must be the cutvertex of $L(w)$ by the choice of $a$. Then in 
the simplicial complex $C_3$, the link graph at $w$ has maximum degree less than $a$. Thus $C_3$ is 
$a$-nice and we can apply the induction hypothesis. So $C_3$ has a stretching of the desired type, 
and $C_3$ is a stretching of $C_2$. This completes the induction step in this case. 

{\bf Case 2:} not Case 1. Then the vertex $e$ is contained in a proper 2-separator of $L(v)$. Let 
$x$ be the other vertex in that 2-separator. We obtain $C_3$ from $C_2$ by stretching at the 
2-separator $\{e,x\}$. The simplicial complex $C_3$ has strictly smaller degree-sequence 
than 
$C_2$ by \autoref{2-stretch-change}. We verify that $C_3$ is $a$-nice. All link graphs at new 
vertices are still 2-connected in $C_3$. Let $w$ and $w'$ be the endvertices of the edges $e$ and 
$x$ aside from $v$, respectively. Hence it remains so show 
that the link graphs at $w$ and $w'$ in $C_3$ are 2-connected, stars of parallel graphs whose 
cutvertex has degree $a$ or have maximal degree less than $a$. If the link graph at $w$ in $C_2$ is 
2-connected, it is also 2-connected in $C_3$ (as coadding a star preserves 2-connectedness). Hence 
we may assume that the link graph at $w$ in $C_2$ is a star of parallel graphs. and $e$ is its 
cutvertex by the choice of $a$. Then either $L(w)$ is still a star of parallel graphs in $C_3$ or 
else it has maximum degree less than $a$. The same analysis applies to the vertex `$w'$' in place 
of `$w$'. Thus $C_3$ is $a$-nice. So by induction there is a stretching of $C_3$ of the desired 
type, and $C_3$ is a stretching of $C_2$. This completes the induction step, and hence the proof of 
this sublemma. 
\end{proof}

Let $C_3$ be stretching of the simplicial complex $C_2$ as in \autoref{be_nice}. If $C_3$ has a 
non-planar link, we are done. Hence we may assume that the simplicial complex $C_3$ is 
$a$-structured. If $C_3$ has cutvertex-degree less than $a$, we can apply the induction 
hypothesis. 
Hence we may assume that $C_3$ has a vertex $v$ such that the link graph at $v$ is a star of 
parallel graphs whose cutvertex $e$ has degree precisely $a$. Now we show how the property 
`$a$-structured' implies the existence of certain paths, which can then be 
contracted to reduce the cutvertex-degree.

\begin{sublem}\label{path_exists}
 There is a path $P_n$ from the vertex $v$ starting with $e$ to another vertex $w_n$ whose link 
graph is a star of parallel graphs. All link graphs at internal vertices of the path are parallel 
graphs and all edges of the path have the same face-degree. 
\end{sublem}

\begin{proof} 
We build the path $P_n=w_0e_1w_1...e_nw_{n}$ recursively as follows.
We start with $e_1=e$ and $w_0=v$ and let $w_1$ be the endvertex of the edge $e$ aside from $v$. 
Assume we already constructed $w_0e_1w_1...e_iw_{i}$.
If the link graph $L(w_i)$ is a star of parallel graphs we stop and let $i=n$ and 
$w_i=w_n$.
Otherwise by assumption, the link graph $L(w_i)$ must be 2-connected. As the edge $e_i$ has 
degree precisely $a$ the link graph $L(w_i)$ of the $a$-structured simplicial complex $C_3$ is a 
parallel graph.
So the link graph $L(w_i)$ contains a unique vertex except from $e_i$ that has degree larger than 
two, and this vertex has the same degree as the vertex $e_i$. We pick this vertex for $e_{i+1}$. 
Note that 
$e_{i+1}$ is an edge of the simplicial complex $C$. We let $w_{i+1}$ be the endvertex of 
$e_{i+1}$ different from $w_i$.
Note that all edges $e_i$ have the same face-degree by construction.
Since any path\footnote{A \emph{path} in a graph is a sequence alternating between vertices and 
edges such that adjacent members are incident, and all vertices (and edges) are distinct.} in $C$ 
must be finite, it suffices to prove the following:

\begin{fact}
For all $i$ the walk $P_i$ is a path. 
\end{fact}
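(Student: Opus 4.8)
The plan is to prove the Fact by showing that the vertices $w_0,\dots,w_n$ produced by the recursion are pairwise distinct. This suffices: since each $e_m$ joins the consecutive vertices $w_{m-1}$ and $w_m$, once these vertices are known to be distinct the edge $e_m$ is determined by its unordered pair of endvertices, so the $e_m$ are automatically distinct as well, and $P_n$ meets the definition of a path given in the footnote. I would argue by contradiction, choosing the least index $i+1$ such that $P_i$ is a path but $w_{i+1}=w_j$ for some $0\le j\le i$.

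Three ingredients drive the argument. First, $C$ is a simplicial complex, so every edge has two distinct endvertices and any two vertices are joined by at most one edge. Second, by construction every edge $e_m$ of the walk has face-degree exactly $a$; equivalently, the vertex $e_m$ has degree $a$ in the link graph at each of its endvertices. Third, a parallel graph has exactly two vertices of degree greater than two, namely its branch vertices, which share a common degree, whereas a star of parallel graphs whose cutvertex has degree $a$ has that cutvertex as its unique vertex of degree $a$ (this uses that it is glued from at least two parallel graphs, so the cutvertex degree strictly exceeds every other degree).

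First I would settle the two nearby cases. The case $j=i$ cannot occur, as $e_{i+1}$ has distinct endvertices. If $j=i-1$, then $e_{i+1}$ and $e_i$ both join $w_{i-1}$ and $w_i$, so they coincide by simpliciality; but $e_i$ and $e_{i+1}$ are the two distinct branch vertices of the parallel graph $L(w_i)$, a contradiction. The remaining, and principal, case is $0\le j\le i-2$. Here $e_{i+1}$ is an edge incident with $w_j$ and of face-degree $a$, hence a degree-$a$ vertex of $L(w_j)$; note that $1\le j\le n-1$ forces $L(w_j)$ to be a parallel graph, while $j=0$ gives the star of parallel graphs $L(v)$ with cutvertex $e_1=e$. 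By the third ingredient this pins down $e_{i+1}$: either $e_{i+1}=e_j$ or $e_{i+1}=e_{j+1}$ when $j\ge 1$, and $e_{i+1}=e_1$ when $j=0$. Comparing endvertices then finishes the job, since $e_{i+1}$ has endvertices $\{w_i,w_j\}$: matching it against $e_j$, which has endvertices $\{w_{j-1},w_j\}$, yields $w_i=w_{j-1}$; against $e_{j+1}$, with endvertices $\{w_j,w_{j+1}\}$, yields $w_i=w_{j+1}$; and against $e_1$, with endvertices $\{w_0,w_1\}$, yields $w_i=w_1$. In each instance the relevant index ($j-1$ or $j+1$ when $j\ge 1$, and $1$ when $j=0$) differs from $i$ and lies in $\{0,\dots,i-1\}$ because $j\le i-2$, so the conclusion contradicts the distinctness of $w_0,\dots,w_i$.

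The nearby cases are immediate; the step I expect to require the most care is the far-repeat case $0\le j\le i-2$, where the entire force of the argument is the degree bookkeeping of the second and third ingredients, which alone guarantees that a returning edge $e_{i+1}$ must be one of the (at most two) path edges already present at $w_j$.
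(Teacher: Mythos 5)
Your proof is correct and follows essentially the same route as the paper's: induction on the length of the walk, using that the returning edge has face-degree $a$ and hence must coincide with one of the at most two degree-$a$ vertices of the link graph at the repeated vertex (the path edges $e_j$, $e_{j+1}$, or $e_1$ at $v$), which contradicts the induction hypothesis for far repeats, while simpliciality (no loops or parallel edges) rules out the near repeats. The only difference is organisational: you dispose of the two near cases up front, whereas the paper derives $j\geq i-2$ first and then excludes those two values.
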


\begin{proof}
We prove this by induction on $i$. The base case is that $i=1$.
Suppose for a contradiction there is some $j< i$ such that 
$w_i=w_j$. 

{\bf Case 1:} $j=0$: then $e_i$ must be equal to the only vertex of $L(v)$ of the same degree; that 
is, $e_i$ is equal to the edge $e_1$. But then the endvertex $w_{i-1}$ of the edge $e_i$ is equal 
to the vertex $w_1$. This is a contradiction to the induction hypothesis. Hence $w_i$ 
cannot be equal to $w_0$. 

{\bf Case 2:} $j\geq 1$: as in the link graph $L(w_j)$ the only two vertices with the same degree 
as the vertex $e_i$ are $e_{j}$ and $e_{j+1}$, it must be that the edge $e_i$ is equal to one of 
these two edges; that is, the endvertex $w_{i-1}$ of $e_i$ must be equal to 
$w_{j-1}$ or $w_{j+1}$. 
The vertex $w_{j-1}$ cannot be an option by the induction hypothesis. 
Similarly, the vertex $w_{j+1}$ cannot be an option by the induction hypothesis if $j+1< i-1$. So 
$j+2\geq i$, so 
$j=i-2$ or $j=i-1$. 
Since the simplicial complex $C$  has no loops or parallel edges any three consecutive 
vertices on $P_i$, such as $w_{i-2}$, $w_{i-1}$ and $w_i$, are distinct. Hence neither  $j=i-2$ nor 
$j=i-1$ 
are possible. Thus we have also reached a 
contradiction in this case. Hence the vertex $w_i$ is distinct from all previous vertices on the 
walk $P_i$.  
\end{proof}

\end{proof}

Given a path $P_n$ with endvertex $w_n$ as in \autoref{path_exists}, whose link graph $L(w_n)$ at 
$w_n$ is a star of parallel 
graphs, denote the (unique) cut-vertex of the link graph $L(w_n)$ by $x$.
\begin{sublem}\label{cutvx}
 The cut-vertex $x$ is equal to the last edge $e_n$ on the path $P_n$.
\end{sublem}

\begin{proof}
We denote the degree of the cut-vertex $x$ by $a'$. By the definition of $a$, we have, $a'\leq a$. 

On the other hand by \autoref{path_exists} the vertices $e$ and $e_n$ have the same degree in 
the graphs $L(v)$ and $L(w_n)$, and this 
degree is equal to $a$ by the choice of the vertex $v$. 
As $L(w_n)$ is a star of parallel graphs with a 
cut-vertex, the degree of the cut-vertex $x$ is strictly larger than the degree of any other vertex 
of 
$L(w_n)$.
Hence it must be that $a=a'$ and $x=e_n$.
\end{proof} 
 
By \autoref{path_exists} and \autoref{cutvx}, there is a set of vertex-disjoint paths in $C_3$ 
such that any of their endvertices has a link graph that is a star of parallel graphs whose 
cutvertex has degree $a$. All internal vertices of these paths are parallel graphs. And by taking 
this collection maximal, we ensure that any vertex whose link graph is a star of parallel 
graphs whose 
cutvertex has degree $a$ is an endvertex of one of these paths. We denote the set of these paths by 
$\Pcal$. 
We obtain the 2-complex $C_4$ from $C_3$ by contracting all edges on these paths of $\Pcal$.  
Contracting the edges on the paths recursively, we note at each step that these edges are 
reversible by \autoref{is_rev}. 
At all vertices of $C$ except 
for those vertices on the paths, the two $2$-complexes $C_3$ and $C_4$ have the same link graphs. 
In 
addition, $C_4$ has the contraction vertices, one for each of the vertex-disjoint paths. These 
link graphs are the vertex-sum of the link graphs at the vertices on its path, see 
{\cite[\autoref*{sec_vertex_sum}]{3space1}} for background on vertex-sums.
So the link graph at a new contraction vertex is (isomorphic to) the vertex sum of the link graphs 
at the two endvertices plus various subdivision vertices coming from the parallel graphs at 
internal vertices of the path. By \autoref{cutvx}, each 
of 
these vertices in the link graph has degree strictly less than $a$. Hence all new contraction 
vertices have maximum degree less than $a$. Hence the cutvertex-degree of $C_4$ is 
 strictly smaller than $a$. 
 So the 2-complex $C_4$ satisfies all the conditions to apply the induction hypothesis except that 
it may not be a simplicial complex as it may have edges that are loops or parallel edges. 

Now we 
show how we can stretch local branches of $C_3$ to get a simplicial complex $C_3'$ so that the 
simplicial complex $C_4'$ obtained from $C_3'$ by contracting all the paths in $\Pcal$ is a 
simplicial complex. We obtain $C_3'$ from $C_3$ by stretching at each endvertex of a path in $\Pcal$ 
all the branches and at each interior vertex of a path in $\Pcal$ we stretch at the 2-separator 
consisting of the two branching vertices of its parallel graph. We obtain $C_4'$ from $C_3'$ by 
contracting the above defined family of paths $\Pcal$. It is straightforward to check that $C_4'$ 
is a simplicial complex -- and is a stretching of $C$ with smaller cutvertex-degree. This completes 
the induction step, and hence this proof. 
\end{proof}

\subsection{Proofs of \autoref{Kura_simply_con} and \autoref{main_streching}}

We conclude this section by proving the following theorems mentioned in the Introduction.

\begin{proof}[Proof of \autoref{main_streching}]
Let $C$ be a simplicial complex. Recall that  \autoref{main_streching} says there is a simplicial 
complex $C'''$ obtained from $C$ by stretching 
so that $C'''$ is 
locally almost 3-connected and stretched out or $C'''$ has a non-planar link; moreover $C$ has a 
planar rotation system if and 
only if $C'''$ has a planar rotation system.

 By \autoref{reduce_to loc_2-con} there is a stretching $C'$ 
of $C$ that is a simplicial complex that is locally almost 2-connected or has a non-planar link. 
As we are done otherwise, we may assume that $C'$ is locally almost 2-connected.
By 
\autoref{stretch_loc_con} 
there is a stretching 
$C''$ of $C'$ that is a locally almost 3-connected simplicial complex.

 \begin{sublem}\label{make_strected_out}
 Let $C''$ be a locally almost 3-connected simplicial complex. Then there is a 
stretching $C'''$ of $C''$ that has additionally the property that it is stretched out.
\end{sublem}

\begin{proof}
We say that an edge of face-degree two is \emph{stretched out} if it has one endvertex that is not 
a subdivision of a 3-connected graph or a parallel graph whose branch vertices have degree at least 
three. Note that a simplicial complex in which every edge of degree two is stretched out is 
stretched out itself.
We prove this sublemma by induction on the number of edges of degree two that are not stretched 
out. 
So assume there is an edge $e$ that is not stretched out. Let $v$ be one of its endvertices.

{\bf Case 1:} the link graph at $v$ is a parallel graph whose two branch vertices $x_1$ and $x_2$ 
have degree at least three. Then we stretch at the 2-separator $(x_1,x_2)$ at $v$. 
This gives a simplicial complex $\tilde C$ that in addition to the vertex $v$ has also one new 
vertex for 
every component of $L(v)-x_1-x_2$. The link graphs at these new vertices are cycles. Hence every 
edge of degree two incident with these new vertices is stretched out. Thus $\tilde C$ has strictly 
less 
edges of degree two that are not stretched out.

{\bf Case 2:} the link graph at $v$ is a subdivision of a 3-connected graph. 
Then the vertex $e$ of $L(v)$ is contained in a subdivided edge. Let $P$ be the path of that 
subdivided edge, 
and let $x_1$ and $x_2$ be its endvertices. Then we stretch at the 2-separator $(x_1,x_2)$ at $v$. 
The rest of the analysis is analogue to Case 1. This completes the proof of the sublemma. 
\end{proof} 
 
By \autoref{make_strected_out} we may assume that $C$ has a stretching $C'''$ that is a locally 
almost 3-connected 
and stretched out simplicial complex.  
The `Moreover'-part follows from the fact that $C'''$ is a stretching of $C$ as shown in 
\autoref{planar_rot_preserve}. This completes the proof. 
 \end{proof}

 \begin{proof}[Proof of \autoref{Kura_simply_con}]
Let $C$ be a simply connected simplicial complex. 
Recall that \autoref{Kura_simply_con} says that $C$ has an embedding in 3-space if and 
only if $C$ has no stretching that has a space minor in $\Zcal\cup \Tcal$. 
By {\cite[\autoref*{combi_intro}]{3space2}} $C$ 
is embeddable in 3-space if and only if it has a 
planar rotation system.

By \autoref{main_streching} there is a simplicial complex $C'$ that is a stretching of $C$. 
Moreover $C$ has a planar rotation system if and only if $C'$ has a planar rotation system. 
By that theorem either the simplicial complex $C'$ has a non-planar link or it is locally almost 
3-connected and stretched out. In the first case, 
by 
Kuratowski's theorem, {\cite[\autoref*{reduce_to_looped_cone1}]{3space1}} and 
{\cite[\autoref*{gen_cone}]{3space1}}, the simplicial complex $C'$ has a minor in the finite list 
$\Zcal$ -- so the theorem is true in this case. In the second case by \autoref{Kura_almost} $C'$ has 
a planar rotation system if and only if it has no space minor in 
$\Zcal\cup \Tcal$. 
This completes the proof. 
  \end{proof}

\section{Algorithmic consequences}\label{algo_sec}  

Our proofs give a quadratic algorithm that verifies whether a given 2-dimensional 
simplicial complex has a planar rotation system. This gives a quadratic algorithm that checks 
whether a given 2-dimensional simplicial complex has an embedding in a (compact) 
orientable 3-manifold by 
{\cite[\autoref*{is_manifold}]{3space2}} (for the general, not necessarily orientable, case see 
{\cite[\autoref*{non-or}]{3space2}}). In particular, for simply connected 2-complexes this 
gives 
a quadratic algorithm 
that tests embeddability in 3-space by Perelman's theorem. The algorithm has several components. 
Next we explain them and prove the relevant lemmas afterwards. 

\begin{enumerate}
 \item The locally almost 3-connected and stretched out case. The corresponding fact in the paper 
is \autoref{41_2}. This clearly has a linear time algorithm. 
\item Reduction of the locally almost 3-connected case to the locally almost 3-connected and 
stretched out case. The corresponding fact in the paper 
is \autoref{make_strected_out}. This clearly has a linear time algorithm. 
 \item Reduction of the locally almost 2-connected case to the locally almost 3-connected case. 
 The corresponding fact in the paper 
is \autoref{stretch_loc_con}. To analyse the running time, we do this step slightly differently 
than in the paper. First we compute a Tutte-decomposition\footnote{A Tutte-decomposition is a 
decomposition of a graph (or matroid) into its 3-connected components along 2-separators. In the 
special case of graphs it is also known as the \emph{SPQR tree}.} at every 2-connected link graph. 
This 
tells us precisely how we can stretch that vertex along 2-separators. Doing these stretchings at 
different vertices may affect the link graphs at other vertices. Indeed, it may affect other 
vertices in that we coadd stars at their link graphs. 
However, once a link graph is a 
subdivision of a 3-connected graph, it will stay that. So the vertices we may have to look at 
multiple times are vertices where the link graphs are parallel graphs. But if we need to stretch 
there again, the maximum degree goes down. Using \autoref{coadd_star} below, it is straightforward 
to show that this step can be done 
in linear time. 
 \item Reduction of the general case to the locally almost 2-connected case.   The corresponding 
fact in the paper is \autoref{reduce_to loc_2-con}.

This is done by 
recursion on the cutvertex-degree $a$. So let us analyse the step from $a$ to $a-1$ in detail. 
The input is a simplicial complex $C$ and we measure its size by $\sum (deg(e)-2)$, where the sum 
ranges over all edges $e$ of $C$ of degree at least three. We refer to that sum as the \emph{degree 
parameter}.\footnote{ 
We remark that at edges of degree at most two the compatibility conditions for planar rotation 
systems is always satisfied and hence we do not need to take them into account in the definition of 
the degree parameter.} 

The stretching related to \autoref{stretching_loc_1-cuts} can be done in linear time as computing 
the block-cutvertex-tree of link graphs can be done in linear time. For the part 
corresponding to \autoref{be_nice} we compute the stretching via Tutte-decompositions as in step 3 
explained above, and then we check for planarity for each 3-connected link graph. If it is planar, 
we remember a planar rotation system and if we stretch later an edge incident with that vertex at 
the other endvertex we check whether this stretching is compatible with the chosen planar rotation 
system. This can be done in 
linear time. The construction of the set $\Pcal$ of paths can clearly be done 
in linear time.
Hence the whole recursion step from $a$ to $a-1$ just takes linear time. The output is the 
simplicial complex $C_4'$. 

However, with the current argument, the degree parameter of $C_4'$ might be larger 
than the degree parameter of the input $C$. Indeed, stretching a local branch may increase the 
degree parameter. Hence here 
we explain how we modify the construction of the simplicial complex $C_4'$ so that the degree 
parameter does not increase. 
First note that none of the stretching operations except for stretching a branch increases the 
degree parameter, compare \autoref{degpar}. 
We obtain $C''$ from $C_3'$ by contracting all edges 
$e[B]$ of degree at least three that 
were added by stretching a local branch, and contracting the resulting faces of size two (this 
has the effect of reversing the stretching operations at those edges $e[B]$); additionally we 
stretch so that no edge of degree two has both endvertices on the same path -- similarly as in the 
construction of stretched out in \autoref{make_strected_out} (this ensures that edges of degree two 
do not make a problem 
later. This does 
not increase the degree parameter). It is easy to see that $C''$ is a simplicial complex and that 
$C$ has a planar rotation system if and only if $C''$ has one. 
Each path $P\in \Pcal$ of $C_3'$ contracts onto a closed trail of $C''$. We obtain $C_3''$ from 
$C''$ by stretching branches for each closed trail $P\in \Pcal$ as follows.

{\bf Case 1:} in the simplicial complex $C''$ the trail $P$ has at least one internal vertex. 
Denote the edge of $P$ incident with $v$ by 
$e_1$ and the edge of $P$ incident with $w$ by 
$e_2$. Then we stretch at the link graphs of 
 $v$ and $w$ all branches at $e_1$ and $e_2$, respectively.

{\bf Case 2:} not Case 1. Note that $P$ must consist of at least one edge by 
\autoref{path_exists}. And that edge is not a loop as $C''$ is a simplicial complex.
So $P$ consists of a single edge $e$. 
Then in the simplicial complex $C''$ there is no edge in parallel to $e$. We stretch all branches 
of the link graph at  $v$ at the vertex $e$ (but not for $w$).

We obtain $C_4''$ from  $C_3''$ by contracting all $P\in \Pcal$. It is 
straightforward to check that $C_4''$ is a simplicial complex and that the degree parameter of 
$C_4''$ is at 
most that of $C$, compare \autoref{C4}. By construction the cutvertex degree of $C_4''$ is 
strictly smaller than that of $C$. So $C_4''$ is a suitable output of the recursion step. As each 
recursion step takes linear time, all of them together take at most quadratic time. 
\end{enumerate}
This completes the description of the algorithm. 

\subsection{Some lemmas for the algorithm above}

Here we prove the lemmas referred to in the beginning of \autoref{algo_sec}. 

The \emph{degree-parameter} of a graph $G$ is $\sum(deg(v)-2)$, where the sum ranges 
over all vertices. 

\begin{lem}\label{coadd_star}
 Coadding a star at a vertex $v$ preserves the degree parameter.
 \end{lem}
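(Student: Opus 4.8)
The plan is to reduce the statement to a one-line Euler-type count. First I would rewrite the degree parameter in a more convenient form. By the handshake lemma,
\[
\sum_{v}(\deg(v)-2) \;=\; \sum_v \deg(v) \;-\; 2|V(G)| \;=\; 2|E(G)| - 2|V(G)|,
\]
so the degree parameter of $G$ equals $2\bigl(|E(G)|-|V(G)|\bigr)$. Hence it suffices to show that coadding a star at a vertex leaves the quantity $|E(G)|-|V(G)|$ unchanged.

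Next I would unwind the definition of coadding a star (the operation inverse to contracting all edges of a star, as used in \cite{3space1}): it deletes the vertex $v$, inserts a star $S$ in its place, and redistributes the edges formerly incident with $v$ among the vertices of $S$, creating no edges other than the edges of $S$ itself. Because contracting all edges of $S$ must return the single vertex $v$, the star $S$ is a tree, so $|E(S)| = |V(S)| - 1$. Therefore the operation increases the number of vertices by $|V(S)|-1$ and the number of edges by $|E(S)| = |V(S)|-1$; the two increments are equal, while the redistributed edges are neither created nor destroyed.

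Combining the two observations, $|V(G)|$ and $|E(G)|$ increase by the same amount, so $|E(G)|-|V(G)|$ is invariant and with it the degree parameter. The only real obstacle here is bookkeeping rather than mathematics: one must check against the definition in \cite{3space1} that coadding a star genuinely adds as many new edges as new vertices and loses no incident edge in the redistribution. Once the tree identity $|E(S)|=|V(S)|-1$ is in hand the conclusion is immediate; indeed, the same argument shows that coadding any tree at a vertex preserves the degree parameter.
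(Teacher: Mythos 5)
Your proof is correct, and it takes a different route from the paper's. The paper argues locally: it writes down the difference between the contribution of $v$ before the coaddition and the contributions of the centre and leaves afterwards, namely $\deg(v)-2-\bigl(\sum_{i=1}^k(\deg(v_i)-2)+(k-2)\bigr)$, and kills it with the identity $\deg(v)=\sum_{i=1}^k\deg(v_i)-k$, which encodes the fact that the old edges at $v$ are redistributed among the leaves. You instead observe globally that the degree parameter is $2\bigl(|E(G)|-|V(G)|\bigr)$ by the handshake lemma, and that coadding a star adds equally many vertices and edges because a star is a tree. Both are one-line counts and both rest on the same underlying fact (no incident edge is created or destroyed in the redistribution), but your version buys something: it is insensitive to how the old edges are distributed over the new vertices (the paper's identity tacitly assumes they all go to the leaves), and it generalises verbatim to coadding an arbitrary tree, as you note. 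The one thing worth flagging is that your reduction to $2(|E|-|V|)$ uses that the graph degree parameter is summed over \emph{all} vertices, including those of degree at most two; that is indeed the definition given immediately before the lemma (and the one the paper's own proof uses), but it differs from the ``abbreviated'' conventions used elsewhere in the paper, so it is worth being explicit that you are using the unabbreviated sum.
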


\begin{proof}
 Let $k$ be the degree of the center of the coadded star and $v_1, .., v_k$ be the leaves of the 
coadded star. The degree parameter of the graph before minus the degree parameter after the 
coaddition is:
\[
deg(v)-2 -\left(\sum_{i=1}^k (deg(v_i)-2)+(k-2)\right)
\]
As $deg(v)=\sum_{i=1}^k deg(v_i)-k$ the above sum evaluates to zero, completing the proof.
\end{proof}

\begin{lem}\label{degpar}
All stretching operations except for possibly stretching a local branch do not increase the degree 
parameter.
\end{lem}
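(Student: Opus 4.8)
The plan is to treat the four relevant operations (2-stretching at a local $2$-separator, stretching an edge, contracting a reversible non-loop edge, and splitting a vertex) separately, and in each case to compute the effect on the degree parameter $\sum_{\deg(e)\ge 3}(\deg(e)-2)$ by tracking the face-degree of every edge. The key simplification is that each operation changes the face-degree of only a bounded number of edges, so it suffices to account for the edges that are created, deleted, or have their set of incident faces altered.

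Two of the operations are immediate. Splitting a vertex $v$ merely reassigns the edges at $v$ to the new vertices and neither adds nor removes a face, so no edge changes its set of incident faces and the degree parameter is unchanged. When we contract a non-loop edge $e=vw$, the edge $e$ is deleted and each face incident with $e$, a triangle $vwx$, loses only the edge $e$ and becomes a size-two face on the two edges formerly joining $x$ to $v$ and to $w$; thus every surviving edge retains exactly its old set of incident faces, and the only change to the degree parameter is the loss of the non-negative contribution of $e$. Hence contraction cannot increase the degree parameter.

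For stretching an edge $e$ of face-degree $d$ in the direction of $f_1,f_2$, the edge $e$ is replaced by $e_1$, incident with $f_1$, $f_2$ and the new size-two face, and by $e_2$, incident with the remaining $d-2$ faces and the new face; the subsequent subdivision of the size-two face introduces only edges of face-degree at most two, which are irrelevant to the parameter. So $\deg(e_1)=3$ and $\deg(e_2)=d-1$, and for $d\ge 4$ the net change is $-(d-2)+1+(d-3)=0$, while for $d=3$ it is $-1+1+0=0$; either way the parameter is preserved. In every application the stretched edge has face-degree at least three, so this covers all cases that occur.

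The substantive case is $2$-stretching at a local $2$-separator $(a,b)$ of the link $L(v)$. Let $m$ be the number of faces incident with both $a$ and $b$ at $v$, let $X_1,\dots,X_n$ be the components of $L(v)-a-b$, and let $d_{a,i}$ and $d_{b,i}$ be the numbers of faces joining $a$ and $b$, respectively, to $X_i$. Reading off the definition of $C_2$, the faces of $C$ survive and keep their incidences with all edges other than $a,b$, while the new edges of $\Delta^+_n$ acquire face-degrees $\deg(\bar a)=\deg(\bar b)=n+m$, $\deg(vq_i)=2$, $\deg(q_iw_a)=1+d_{a,i}$ and $\deg(q_iw_b)=1+d_{b,i}$. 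Since $L(v)$ is $2$-connected each $X_i$ meets both $a$ and $b$, so $d_{a,i},d_{b,i}\ge 1$ and all of these degrees are at least two; using $d_a=m+\sum_i d_{a,i}$ and $d_b=m+\sum_i d_{b,i}$ the net change in the degree parameter is
\[
2(n+m-2)+\sum_{i}(d_{a,i}-1)+\sum_{i}(d_{b,i}-1)-(d_a-2)-(d_b-2)=0 .
\]
The main obstacle is exactly this bookkeeping: extracting the face-degrees of the edges of $\Delta^+_n$ correctly from the intricate definition of $C_2$, and checking that the cancellation is unaffected by the boundary cases (for instance $m=0$ or a component $X_i$ that is a single vertex), which is where $2$-connectivity of $L(v)$ enters.
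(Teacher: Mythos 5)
Your proof is correct and takes essentially the same route as the paper's: operation-by-operation bookkeeping of face-degrees, using the identity $\deg(e)=\deg(e_1)+\deg(e_2)-2$ for edge stretching and, for 2-stretching, exactly the cancellation computation that the paper compresses into ``proved similarly as \autoref{coadd_star}''. Your explicit proviso that stretched edges have face-degree at least three (true in every application in the paper) is a point the paper's proof silently assumes -- stretching a degree-two edge would actually raise the parameter by one -- but this extra care does not change the substance or the approach.
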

 
\begin{proof}
 This lemma is immediate for splitting vertices and contracting reversible non-loops.
 If we stretch an edge, first note that pre-stretching does not change the degree parameter as 
$deg(e)=deg(e_1)+deg(e_2)-2$ if $e$ is stretched to $e_1$ and $e_2$.  Then note that subdivisions 
of faces do not change the degree parameter. 

The fact that 2-stretching does not change the degree parameter is proved similarly as 
\autoref{coadd_star}. 
\end{proof}
 
 \begin{lem}\label{C4}
The complex $C_4''$ is a simplicial complex whose degree parameter is not larger 
than that of $C$.
\end{lem}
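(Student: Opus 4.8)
The plan is to establish the two assertions separately: that $C_4''$ is a simplicial complex, and that its degree parameter, which I write $\Phi(\,\cdot\,)$, satisfies $\Phi(C_4'')\le\Phi(C)$. The organising principle is \autoref{degpar}: among all the stretching operations only the stretching of a local branch can raise $\Phi$, so everything reduces to checking that each branch stretching used in building $C_4''$ is compensated by a subsequent contraction.

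For the first assertion I would argue exactly as for $C_4'$ in the proof of \autoref{reduce_to loc_2-con}. The only way the passage $C_3''\to C_4''$ could fail to be simplicial is that contracting a closed trail $P\in\Pcal$ produces a loop or two parallel edges, and the branch stretchings of Cases 1 and 2 are tailored to prevent this. In Case 1, separating the branches at the first and last edges $e_1,e_2$ of $P$ at the endpoints $v,w$ ensures that no two edges of $C''$ are forced to coincide after contracting $P$; in Case 2, where $P$ is a single non-loop edge $e$ with no parallel edge in $C''$, stretching every branch at $v$ keeps the edges that would otherwise become parallel on distinct new vertices $v[B]$. Verifying these two cases shows $C_4''$ has neither loops nor parallel edges.

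For the degree parameter I would prove the two-step bound $\Phi(C_4'')\le\Phi(C'')\le\Phi(C)$. The inequality $\Phi(C'')\le\Phi(C)$ holds because $C''$ arises from $C_3'$ by contracting exactly those branch-stretch edges $e[B]$ of degree at least three (undoing precisely the branch stretchings that raised $\Phi$, each by $\deg(e[B])-2$), together with face contractions and degree-two stretchings that do not raise $\Phi$ by \autoref{degpar}; the branch stretchings with low-degree $e[B]$ never raised $\Phi$. For $\Phi(C_4'')\le\Phi(C'')$ I would work one path $P\in\Pcal$ at a time. By \autoref{path_exists} and \autoref{cutvx} every edge of $P$ has face-degree $a\ge 4$, so contracting the $n$ edges of $P$ removes $n(a-2)$ from $\Phi$, while the internal vertices of $P$ carry parallel-graph links whose only high-degree vertices are the contracted edges $e_i$ and so contribute nothing. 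At an endpoint $v$ the link is a star of parallel graphs whose cut-vertex $e_1$ of degree $a$ splits into $m\ge 2$ branches of sizes $d_1,\dots,d_m$ with $\sum_j d_j=a$; stretching these branches adds edges $e[B_j]$ of face-degree $d_j$, contributing $\sum_{d_j\ge 3}(d_j-2)\le\bigl(\sum_{d_j\ge 3}d_j\bigr)-2\le a-2$ to $\Phi$ (since $m\ge 2$, whenever the sum is nonempty it loses at least the one correction $-2$ from its total $\le a$). In Case 2 the single endpoint cost $\le a-2$ is covered by the saving $n(a-2)=a-2$, and in Case 1 the two endpoint costs total at most $2(a-2)\le n(a-2)$ because $n\ge 2$; summing over all paths gives $\Phi(C_4'')\le\Phi(C'')$.

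The step I expect to be the main obstacle is the local count in the second inequality: justifying rigorously that contracting a path changes $\Phi$ only through the vanishing of its degree-$a$ edges and the appearance of the branch-stretch edges $e[B_j]$ — equivalently, that the vertex-sum link at each contraction vertex and the parallel-graph links of internal vertices introduce no new high-degree edges — and that the identifications forced when $C_4''$ is made simplicial can only decrease $\Phi$. The numerical heart is the identity $\sum_j d_j=a$ with $m\ge 2$: it is precisely the splitting of the degree-$a$ cut-vertex into strictly smaller pieces that makes the $-2$ carried by each branch edge outweigh the cost of the branch stretching, so that one contracted edge of degree $a$ already suffices to pay for all the branches at an endpoint.
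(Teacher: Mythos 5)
Your proposal is correct and follows essentially the same route as the paper: it reduces to $\Phi(C'')\le\Phi(C)$ via \autoref{degpar}, and then performs the same per-path local count in which the branch stretchings at the endpoints cost at most $a-2$ each (the paper bounds the Case 1 total by $2\deg(e_1)-4$) while contracting the $n\ge 2$ (resp.\ $n=1$) edges of face-degree $a$ saves $n(a-2)$, with the simplicial-complex claim handled, as in the paper, by noting that the stretchings performed before the contraction prevent loops and parallel edges. Your accounting of the endpoint cost via $\sum_j d_j=a$ is in fact slightly more explicit than the paper's.
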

 
\begin{proof}
 As mentioned above $C''$ is a simplicial complex. By \autoref{degpar} the degree parameter of 
$C''$ is not larger than that of $C$.
Hence it suffices to show for each trail $P\in \Pcal$ that the construction given in the 
two cases plus the contraction afterwards preserves being a simplicial complex and does not 
increase the degree parameter. 

First we treat Case 1; that is, $P$ has an internal vertex. In the construction of $C''$ we never 
contract an edge incident with an internal vertex of $P$ that is not on the path $P$. Hence $P$ is 
a path in $C''$ or a cycle. Then we stretch all the branches at the local cutvertices $e_1$ and 
$e_2$ in the link graphs at $v$ and $w$, respectively. The sum of degrees of the new edges $e[B]$ 
is at most $deg(e_1)+deg(e_2)$. So the degree parameter increased by at most $2deg(e_1)-4$ (as 
$deg(e_1)=deg(e_2)$). Then we contract all edges on $P$. This decreases the degree parameter at 
least by that amount. Thus in total the degree parameter does not increase. The stretching 
before 
the contraction ensures that we do not create parallel edges or loops. 

The analysis in Case 2 is similar. 
\end{proof}

   \section*{Acknowledgement}

I thank Radoslav Fulek for pointing out an error in an earlier version of this paper.

\bibliographystyle{plain}
\bibliography{literatur}

\end{document}